\long\def\remove#1{}
\newtheorem{theorem}{Theorem}[section] 
\newtheorem{obs}[theorem]{Observation}
\newtheorem{definition}[theorem]{Definition}
\newtheorem{proposition}[theorem]{Proposition}
\newenvironment{proof}{{\em Proof:}}{\hfill{\hfill\rule{2mm}{2mm}}}
\newcommand {\mm}[1] {\ifmmode{#1}\else{\mbox{\(#1\)}}\fi}
\newcommand{\img}{\mathrm img}
\newcommand{\rank}                {\mm {\rm rank}}
\newcommand{\cancel}[1]
\begin{document}

\title {Dynamics, Cohomology and Topology}

\author{
Dan Burghelea  \thanks{
Department of Mathematics,
The Ohio State University, Columbus, OH 43210,USA.
Email: {\tt burghele@math.ohio-state.edu}}
}
\date{}

\maketitle

\begin{abstract}
For a smooth Morse-Smale vector field  with Lyapunov constraints (Lyapunov function)
one shows  how and why  the non-triviality of the  cohomology,  as concluded from its additive structure,  detects  rest points   
and the multiplicative structure of the cohomology detects instantons (trajectories between rest points). The same remains true for  Lyapunov closed one form , a more general Lyapunov constraint, but in this presentation this fact is discussed only informally. These observations are  based on the smooth {\it manifold with corner structures} of the stable/unstable sets and of the set of trajectories  of such vector fields. 

\vskip .1in
\end{abstract} 

\setcounter{tocdepth}{1}
\tableofcontents

\section {Introduction} \label {S1}
 This talk \footnote {Lecture given at IMAR Bucharest, November 2023} is about a large class of dynamics,  M-S (Morse--Smale) dynamics or  M-S  vector fields,  whose basic elements are {\it rest points, instantons and periodic trajectories}. I  intend to explain 
 how the cohomology permits to conclude  existence 
 of these elements  (and to some extent derive information about their amount) under the hypotheses of the existence of a Lyapunov function or of a Lyapunov closed  one form. Very many  dynamics in physics   satisfy such Lyapunov  constraints. 
Note that M-S vector fields are   generic, as explained  in section \ref{S3}.
For reasons of simplicity I will suppose that  the underlying manifold of the M-S dynamics is {\it closed } but this hypothesis is  not necessary and  can be removed with appropriate modifications in definitions. 
 
The detection of the rest points, in the case of a Lyapunov function resp.of a Lyapunov closed differential one form of cohomology class $\xi,$ is derived  from the additive structure  of the cohomology $H^\ast (M)$ resp. of the Novikov cohomology $H_N^\ast(M;\xi);$ i.e. from  the dimensions of  components  of 
the graded $\kappa\ \rm{resp.}\  \kappa(\xi)-$ vector space structure of these cohomologies\footnote {here the cohomology $H^\ast(M)$ is a $\kappa-$graded vector space while Novikov cohomology is 
a $\kappa \langle \xi\rangle-$vector spaces, with $\kappa\langle \xi\rangle$ the Novikov  field, as discussed in section \ref {S2} } ,  
cf.Theorem \ref{T41}. This is a well known result in topology and treated by what is usually referred to as  elementary Morse theory resp. Morse--Novikov theory.
 
 The detection of instantons  is a consequence of the  multiplicative structure of the cohomology $H^\ast(M)$  (i.e. the graded algebra structure of the cohomology $H^\ast (M)$) resp. the left $H^\ast(M)-$module structure of $H^\ast_N (M;\xi),$ cf. Theorem \ref{T41}. 
 This last fact was  not known to me until very recently, and apparently not explicit in literature but, I understand it was known to S. Donaldson  as suggested by \cite {AB}.

 The purpose of this talk is to explain these facts based on the {\it manifold with corners}-structures and the explicit description of the corners of the unstable sets and of the spaces of trajectories  of a
 M-S vector field in the presence of Lyapunov constraints. In this paper we will treat in details only the case of Lyapunov function. The details in the case of Lyapunov closed differential one form is based 
 essentially on similar arguments  and will be presented  elsewhere.

\section {Cohomology}\label {S2}

 For a space $X$ and a field $\kappa$  the collection of vector spaces $H^r(X):= H^r(X;\kappa), r= 0,1, \cdots,$ \footnote{when the field $\kappa$ is understood from the context it  will be  deleted from notations} provides a graded $\kappa-$vector space.  Equipped with the products $H^r(X)\otimes H^p (X) \xrightarrow \cup  H^{r+p}(X),$ called cup-products \footnote {the cup-product is the linear map induced in cohomology by the diagonal map $ \Delta: X\to X\times X$ }, this graded vector space becomes   a commutative graded algebra.

If $\xi\in H^1(X;\mathbb R)= Hom (H_1(M,\mathbb Z), \mathbb R) $, since $M$ is a closed manifold, then  $\img \  {\xi}$ is a finite rank abelian group $\Gamma$  embedded in $\mathbb R,$ hence isomorphic to $\mathbb Z^p$ for some integer $p.$
This  group defines the extension field $ \kappa \langle \xi\rangle$ 
of the field $\kappa,$ known as the Novikov field  cf. \cite{F}, and the $\Gamma-$principal  covering $\pi:\tilde M\to M$  
which makes   $H^\ast (\tilde M)$ a  left $\kappa[\Gamma]-$ module for the left action,  
$x\cup u:= \pi^\ast(x) \cup u,$ $x\in H^\ast(M), u\in H^\ast (\tilde M).
$ Here $\cup$ denotes the cup-product in $\tilde M$.  
The Novikov cohomology $H^r_N(M;\xi):=H^r(\tilde M)\otimes _{\kappa[ \Gamma]} \kappa \langle \xi \rangle $ is a $\kappa \langle \xi\rangle-$vector space 
after the right-side tensor product with $\kappa \langle \xi \rangle, $
equipped  with a structure of left  $H^\ast(M)-$module, derived from the left $H^\ast(M)-$module structure of $H^\ast(\tilde M).$
 This module structure is provided by the linear map $\tilde \Delta: \tilde M\to M\times \tilde M, \tilde \Delta (x)= (\pi(x), x)$ which induces 
$H^\ast (M) \otimes_\kappa H^\ast (\tilde M)\otimes_{\kappa[\Gamma]} \kappa \langle \xi\rangle \to H^\ast(\tilde M)\otimes_{\kappa[\Gamma]} \kappa \langle \xi\rangle .$  
 If $\omega\in \Omega^1(M), d\omega=0,$ representing the cohomology class $\xi,$ then $\pi^\ast \omega= d f$ with $ f:\tilde M\to \mathbb R$ called {\it lift of $\omega$}, unique up to an additive constant. For more details consult \cite {F}, \cite {BH}. 

\section {Dynamics}\label {S3}

A smooth dynamics on a smooth manifold $M^n$ is a  one parameter group of diffeomorphisms $\varphi : \mathbb R\times M\to M,$ equivalently a  complete smooth vector field $X$ on $M.$

The elements of the dynamics $X$ are  the set of rest points $\mathcal X:= \{ x\in M\mid X(x)=0\}$
and the set of its trajectories, i.e. smooth maps $\gamma : \mathbb R\to M$ with $d\gamma(t)/ dt= X(\gamma(t)).$ One writes $\gamma_p(t), p\in M,$ for the trajectory with $\gamma_p(0)=p, $ and  $\gamma^\pm_p$ for the  restrictions of $\gamma_p$  to $(-\infty ,0]$ resp.   $[0,\infty).$

 Among these trajectories one specifies :
 \begin {itemize}
 \item  the {\bf instantons} $\mathcal T:= \{\gamma \mid \lim_{t\to -\infty / \infty} \gamma (x)= x /y \mid x,y\in \mathcal X \},$  trajectories between rest points, and
 \item the {\bf periodic trajectories} $\mathcal P: = \{\gamma \mid  \gamma(t+T)=\gamma(t) \rm \ {for\ some} \ T\}.$ 
 \end{itemize} as trajectories of interest.

 Note that one also has:
 \begin{itemize} 
 \item for any $x\in \mathcal X$  two relevant subsets  of $M,$ the {\bf unstable} resp. {\bf  stable set} 
 $$W^\mp_x: \{ y\in M  \mid \lim_{t \to - \infty/+\infty} \gamma_y(t)=x\},$$    
\item for any $x,y\in \mathcal X$  the subset  $\mathcal M(x,y)\subset M,$ $$ \mathcal M(x,y):= W^-_x\cap W^+_y,$$  consisting of the points located on the instantons from  $x$ to $y,$ 
and 
\item when $x\ne y,$ 
the quotient space, $$\mathcal T(x,y): = \mathcal M(x,y) / \mathbb R,$$ with respect to the free action  of $\mathbb R$ on $\mathcal M(x,y)$ induced by $\varphi,$ and
with $\pi:\mathcal M(x,y)\to \mathcal T(x,y)$ denoting the quotient map. 
\end{itemize}

Let   $$\pi\times f:\mathcal M(x,y)\to \mathcal T(x,y)\times (f(y), f(x)) \simeq \mathcal T(x,y)\times (0,1),$$
be the canonical identification  of $\mathcal M(x,y)$ to $\mathcal T(x,y)\times (f(y), f(x)),$ ultimately to $\mathcal T(x,y)\times (0, 1),$ once an identification  of the interval $(f(y), f(x))$ to $(0,1)$ is chosen.

For  $x\ne y$ one  writes $x>y$ if $\mathcal T(x,y)\ne \emptyset.$
 
\vskip .2in
{\bf M-S (Morse--Smale) vector fields}, cf. \cite {KH}, \cite{I}
\vskip .1in
In this paper we call {\it M-S vector field}, or {\it M-S dynamics,}  a complete smooth vector field \footnote {when M is closed any vector field in complete} $X$ on a smooth manifold $M$ which satisfies the following properties.

\begin{enumerate}
\item  The rest points $\mathcal X=\{x\in M\mid X(x)=0\}$ are  hyperbolic \footnote {the linearization of the vector field $X$ at any rest point is invertible, cf. \cite{KH}}, hence isolated and  with a well defined index   $i(x)\in \{0,1,\cdots \dim M\}$ and with the sets $W^-_x$ resp. $W^+_x$ manifolds diffeomorphic to $\mathbb R^k$ resp. $\mathbb R^{n-k},$ $k= i(x).$ 
One has  $\mathcal X= \sqcup \mathcal X_k,$  $$\mathcal X_k=\{ x\in \mathcal X\mid  i(x)=k \}$$ and one denotes by $$i^\pm_x : W^\pm _x\to M$$  the corresponding injective immersion 
\footnote {which exists by Hadamard--Perron theorem, cf \cite{KH}}.
Note that If for any  $x\in \mathcal X$ there exists coordinates $(t_1, t_2, \cdots t_n) $ in a neighborhood of $x$, s.t. $$X(t_1, \cdots t_n) = -\sum _{1\leq i \leq k} t_i\partial / \partial t_i + \sum _{k+1\leq i \leq n} t_i\partial / \partial t_i$$
then  this property is satisfied and $i(x)= k.$ 
\item  For each  periodic trajectories $\gamma\in \mathcal P$ the vector field $X$ is hyperbolic in normal directions to $\gamma,$
which implies that  the trajectory $\gamma$ is isolated and has a well defined  index $i(\gamma)\in\{0,1, \cdots, \dim M-1\}.$
This is indeed the case and $i(\gamma)=k,$  if for any $x\in \gamma$ there exists coordinates $(t_1, t_2, \cdots t_n)$ in a neighborhood of $x$ s.t. $$X(t_1, \cdots t_n) = -\sum _{2\leq i \leq k+1} t_i\partial / \partial t_i + \sum _{k+2\leq i \leq n} t_i\partial / \partial t_i.$$
\item For any $x,y\in \mathcal X$ the maps $i^-_x$ and $i^+_y$ are transversal, equivalently the unstable manifold $W^-_x$ and the stable manifold $W^+_y$ are transversal,
which implies that the set  $\mathcal M(x,y)$ is a smooth manifold of dimension $i(x)- i(y)$ which is stably parallelizable, as explained  in section \ref {S7}. This implies that for $x\ne y$ the set $\mathcal T(x,y)$  is also a smooth stably parallelizable  manifold and has dimension $i(x,y)-1.$

One writes 
$$\mathcal T(r+p, p):= \sqcup_{\{x\in \mathcal X_{r+p},y\in \mathcal X_r\}} \mathcal T(x,y)$$
$$\mathcal T(r):= \sqcup_p \mathcal T(r+p,r)$$
$$\mathcal T=\sqcup_r \mathcal T(r)$$  where 
and one denotes by 
 $$i_{x,y} :\mathcal M(x,y)\to M$$ the corresponding  injective smooth immersion.
\end{enumerate}
Usually there are  more transversality conditions  assumed in item 3.  in order to qualify the dynamics $X$ to be  M-S, cf. \cite{KH} , \cite{I} or \cite{P};  one requires that the stable and the  unstable sets for  both rest points and closed trajectories,  which in view of item 1. and item 2. above are all manifolds, be  transversal.  For the purpose of this paper item 3. as formulated  suffices and we stick with this shorter list  of requirements for the concept of M-S vector field.

The following weaker version of Kupka-Smale theorem hods true.

\begin{theorem} (Kupka-Smale) \

The set of M-S vector fields  on a closed manifold $M$ is generic in the set of all smooth vector fields w.r. to any  $C^r-$topology, $r\geq 1.$ 

More precisely, 
for any smooth vector field $X$ and any $\epsilon>0$ one can find M-S vector fields  $\epsilon-$closed to $X$, and for any smooth M-S vector field $X$ there exists $\epsilon >0$ s.t. any smooth vector field $X',$ $\epsilon-$closed to $X,$ remains a M-S vector field.
Moreover, there exists homeomorphism  $\varphi: M\to M$ 
 which intertwines  the rest points, the instantons  and the periodic trajectories of $X$ with those of $X'$ keeping their indexes. 
\end{theorem}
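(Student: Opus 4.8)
The plan is to prove the Kupka--Smale theorem in the weakened form stated, which has three distinct assertions: (i) genericity/density of M-S vector fields, (ii) openness (local structural robustness), and (iii) topological conjugacy of nearby M-S fields via an index-preserving homeomorphism.

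\myparagraph{Density (approximation by M-S fields).}
First I would establish that the M-S conditions can be achieved by arbitrarily small $C^r$ perturbations. The strategy is to enforce the three defining properties in sequence, each by a transversality argument. For the hyperbolicity of rest points and periodic orbits (items 1 and 2), I would invoke the Kupka--Smale lemma in its local form: the set of vector fields whose rest points are all hyperbolic and whose periodic orbits are all hyperbolic is residual, obtained by applying the Thom parametric transversality theorem to the map sending $X$ to the linearization (respectively, to the Poincar\'e return map) and requiring the relevant eigenvalue-nondegeneracy. Once all rest points and closed orbits are hyperbolic, the stable and unstable manifolds $W^\pm_x$ are genuine immersed submanifolds by Hadamard--Perron (as already noted in the excerpt). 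To secure the transversality in item 3, namely $W^-_x \pitchfork W^+_y$, I would again use parametric transversality: perturbing $X$ in a tubular neighborhood moves the unstable manifolds in general position relative to the stable ones, so transversality holds on a residual set. Because $M$ is closed and there are finitely many rest points, a finite intersection of residual sets is residual, hence dense, which yields density in any $C^r$ topology with $r \ge 1$.

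\myparagraph{Openness (structural stability of the M-S conditions).}
For the second assertion I would argue that each of the three conditions is open. Hyperbolicity of a rest point is an open condition because invertibility of the linearization persists under small perturbation, and the number of rest points is locally constant (each hyperbolic rest point persists and stays isolated by the implicit function theorem). The same persistence holds for hyperbolic periodic orbits via the persistence of transverse fixed points of the Poincar\'e map. Transversality of $W^-_x$ and $W^+_y$ is open because transversal intersection is an open condition and the stable/unstable manifolds vary continuously (indeed $C^r$) with $X$ in the $C^r$ topology on compacta; since $M$ is closed, compactness removes any uniformity difficulty. Hence for a given M-S field $X$ there is an $\epsilon$-ball of fields that remain M-S.

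\myparagraph{Conjugacy and the index-preserving homeomorphism.}
The deepest part is constructing the homeomorphism $\varphi : M \to M$ intertwining rest points, instantons, and periodic trajectories while preserving indices. Here I would not attempt global structural stability in the strong (Palis--Smale) sense, since that requires an additional no-cycle/Axiom A hypothesis; instead I would use that the stated conclusion only asks to match the finite combinatorial data together with a homeomorphism. The approach is to set $\varphi$ to send each rest point of $X$ to the corresponding persistent rest point of $X'$ (and each periodic orbit to its persistent continuation), matching indices automatically since hyperbolic index is invariant under small perturbation. I would then extend $\varphi$ across $M$ by following the flow: build local conjugacies near each rest point via the Hartman--Grobman theorem, near each periodic orbit via its normal-form linearization, and glue these together along the flow lines using the transversality structure of the $W^\pm$ to get a globally defined homeomorphism. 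The main obstacle lies exactly in this gluing step: ensuring that the locally defined conjugacies are compatible along overlapping flow tubes and extend continuously across the boundaries where stable and unstable manifolds meet. This is where one must use the manifold-with-corners structure of the spaces $\mathcal{M}(x,y)$ and $\mathcal{T}(x,y)$ referenced in item 3 and in section \ref{S7}, since the combinatorial incidence of these strata is precisely what must be preserved; I would handle the matching of strata inductively on the index difference $i(x)-i(y)$, which controls the dimension of $\mathcal{T}(x,y)$ and hence the order in which the flow tubes are assembled.
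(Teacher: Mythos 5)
First, note that the paper contains no proof of this statement at all: it is quoted as a weakened form of the classical Kupka--Smale theorem, with the references \cite{KH}, \cite{I}, \cite{P} standing in for the argument. So your proposal can only be measured against the classical proofs, and against those it has genuine gaps. The most serious one is your openness paragraph. Persistence of each individual hyperbolic rest point or periodic orbit does not give openness of the M-S property, because a $C^r$-small perturbation can create \emph{new} periodic orbits, of arbitrarily long period and far away from the original critical elements, and nothing forces these to be hyperbolic. Likewise, ``transversal intersection is an open condition'' is valid only for compact intersections: $W^-_x$ and $W^+_y$ are non-compact injectively immersed copies of Euclidean space, $C^r$-closeness of $X'$ to $X$ controls them only on compact pieces, and tangencies can appear arbitrarily far out along the immersions; compactness of $M$ does not cure this, contrary to what you assert. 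In fact, with the paper's definition (items 1--3, which allow infinitely many periodic orbits), the class of M-S fields is residual but \emph{not} open: in a Newhouse domain (suspended to a flow), fields with non-hyperbolic periodic orbits are dense, so no $\epsilon$-ball around such a field stays within the class. The openness and conjugacy assertions are really the Palis--Smale structural stability theorem for Morse--Smale systems in the classical sense, and they need the extra hypotheses of that theory (finite non-wandering set, no cycles) --- hypotheses that do hold in the Lyapunov-constrained setting the paper cares about, but that you never invoke.

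Your density argument, while the standard outline, also glosses over the real difficulty of Kupka--Smale: one cannot handle ``all periodic orbits'' by a single parametric transversality argument, since there may be infinitely many of them, of unbounded period, and they cannot be parametrized in advance. The classical proof runs an induction on a period bound $T$: the set of fields whose rest points are hyperbolic, whose periodic orbits of period $\le T$ are hyperbolic, and whose invariant manifolds are transversal up to time comparable with $T$ is open and dense, and the Kupka--Smale set is the countable intersection over $T \in \mathbb{N}$ --- residual by Baire, which is exactly why it is dense but not open. Your ``finite intersection of residual sets'' misses this structure. Finally, for the homeomorphism $\varphi$, you correctly identify the gluing of local Hartman--Grobman conjugacies as the main obstacle, but you leave it unresolved; that gluing is precisely the content of the $\lambda$-lemma and of Palis' inductive construction of compatible tubular families along the partial order $x>y$, and without it the ``moreover'' clause is not proved. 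As it stands, your proposal establishes (modulo standard references) only the density statement; the openness and conjugacy statements remain unproved, and in the generality of the paper's definition of M-S they are false as stated, so any correct proof must first restrict to the gradient-like / no-cycle situation.
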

\vskip .1in
{\bf Lyapunov constraints}
\vskip .1in
A smooth function $f:M\to \mathbb R$ is called {\it Lyapunov} for the vector field $X$  if $df (X) (x) \ne 0$ iff $x\notin \mathcal X$  and $df (X) (x) < 0,$ 
and 

a closed differential one form $\omega \in\Omega^1(M),$ $d\omega=0$ is called {\it Lyapunov} for the vector field $X$ provided $\omega(X) (x) \ne 0$ iff $x\notin \mathcal X$ and $\omega(X)(x) <0.$
\vskip .1in
Note that :

 --the  vector fields  which admit  Lyapunov function do not have closed trajectories (,hence the requirement 2 is not applicable), 

-- the vector fields  which admit  Lyapunov closed one form,  equivalently whose trajectories minimize an {\it action} defined locally by a smooth function,  can have closed trajectories. Concluding the existence  of and counting the periodic trajectories  is an interesting problem in dynamics  cf. \cite{BH2}.  
Many dynamics of physical interest are mathematically described by such vector fields.   

\section {Results} 
\label {S4}

\begin{theorem} \label {T41}
Suppose $M^n$ is a closed smooth manifold and  $X$ is an M-S vector field.
\begin{enumerate}
\item  If $X$ admits  a Lyapunov function, then 
\begin{enumerate} 
\item $\mathcal P=\emptyset,$  $\sharp \pi_0(\mathcal T) <\infty$ \footnote {$\pi_0 (\mathcal T)$ denotes the set of connected components of the space $\mathcal T$} and $\sharp \mathcal X< \infty,$
\item for any field $\kappa$ $\sharp \mathcal X_r  \geq  \dim H^r(M) $  and $ (-1)^k\sum_{r\leq k} \sharp \mathcal X_r\geq (-1)^k\sum_{r\leq k} \dim H^r(M),$ with $\sharp \cdots$ denoting the cardinality of the set $\cdots,$
\item  if the cup-product  $H^r(M) \cup H^p(M) \to H^{r+p}(M)$  is not trivial  then  $\mathcal T(r-1+p,p) \ne  \emptyset.$
\end{enumerate}

\item If $X$ admits a  Lyapunov closed  differential one form $\omega\in \Omega^1(M)$  of cohomology class $\xi= [\omega]$  then 
\begin{enumerate}
\item $\sharp \mathcal X_r  <\infty$ 
and for any $x,y\in \mathcal X$  $\pi_0(\mathcal T(x,y)) <\infty,$ 
\item   For any field $\kappa,$ $\sharp \mathcal X_r \geq \dim H^r_N(M, \xi) $ and  $ (-1)^k\sum_{r\leq k} \sharp \mathcal X_r\geq (-1)^k\sum_{r\leq k} \dim H^r_N(M,\xi),$
\item  If the multiplication $H^r(M) \otimes H^p_N(M,\xi) \to H^{r+p}_N(M;\xi)$ is nontrivial then $\mathcal T(r-1 +p, p)\ne \emptyset.$
\end{enumerate}  
\end{enumerate}
\end{theorem}
Items a. and b. in  parts 1 and part 2 are well known and represent statements  referred to as elementary Morse resp. Morse-Novikov theory. In this paper only part 1 will be argued in details, part 2 will be only superficially discussed.
\vskip .2in

\section {Differential topology- manifolds with corners} \label{S5}
Denote by $\mathbb R^n_+:= [0,\infty)^n$ with  $\mathbb R^n_+ (k)$ the subset of $\mathbb R^n_+$ with exactly $k$ coordinates equal to $0.$

\begin{definition}\

\begin{enumerate}
\item  An $\mathbb R^n_+ -$ manifold $W$ is a space equipped with a sheaf of continuous  functions locally isomorphic to $\mathbb R^n_+$  equipped with the sheaf of smooth functions.

Let  $W(k)$ denotes  the subspace of points corresponding to $\mathbb R^n_+ (k)$ and observe that 
$W(k)$ is a smooth $(n-k)$ dimensional manifold referred to as the $k-$corner of $W.$
 \item 
 An $n-$dimensional manifold with corner $(W, W(k))$ is an $\mathbb R^n_+ -$ manifold such that the topological closure  of each connected component of $W (k)$ is an $\mathbb R^{n-k}_+ -$ manifold.
\end{enumerate}
 \end{definition}

 The product of two manifolds with corners $(W_1, W_1(k))$ and $(W_2, W_2(k))$ is a manifold with corners $(W, W(k))$ with $W= W_1\times W_2$ and $ W(k)$ given by 
 \begin{equation} \label {E2}
 W(k)= \sqcup_{0\leq r\leq k} W_1(r) \times W_2(k-r).
\end{equation} 

 The extension of the differential calculus  from manifolds  and manifolds with boundary to manifolds with corners, like transversality, the integration theory for  differential forms   including  Stokes theorem are straightforward.
In particular one has:

 \begin{obs} \label{O42}\
 
If $(W, W(k))$ is a compact oriented (smooth) manifold with corners  of dimension $n$ and $\omega$ is a differential form of degree $n-1$ then 
$\int _{W(1)} \omega $ is convergent and 
\begin{equation} \label{EE2}
 \int_{W(1)} \omega = \int _W  d\omega.
\end{equation}
\end{obs}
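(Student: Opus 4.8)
The plan is to recognize Observation \ref{O42} as the version of Stokes' theorem appropriate to a compact oriented manifold with corners, and to prove it by reducing the global identity to the model computation on $\mathbb{R}^n_+ = [0,\infty)^n$ via a partition of unity. Before the identity itself, I would settle convergence. Since $W$ is compact and $\omega$ is smooth, $d\omega$ is a continuous $n$-form on a compact space, so $\int_W d\omega$ is finite. For $\int_{W(1)} \omega$, note that $W(1)$ is a smooth $(n-1)$-manifold whose closure adds only the higher corners $W(k)$ with $k \geq 2$, each of dimension $\leq n-2$ and hence of $(n-1)$-dimensional measure zero; as $\omega$ is bounded on the compact set $W$, the integral over $W(1)$ converges and agrees with the integral over its closure.

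Next I would choose a finite atlas of corner charts $\phi_\alpha : U_\alpha \to V_\alpha \subseteq \mathbb{R}^n_+$ compatible with the $\mathbb{R}^n_+$-structure, together with a subordinate smooth partition of unity $\{\rho_\alpha\}$ with $\sum_\alpha \rho_\alpha \equiv 1$. Writing $\omega = \sum_\alpha \rho_\alpha \omega$ and invoking linearity of $d$ and of both integrals, it suffices to establish the identity for each summand $\rho_\alpha \omega$, which is compactly supported inside a single chart. Pushing it forward through $\phi_\alpha$ produces a compactly supported $(n-1)$-form $\eta$ on $\mathbb{R}^n_+$; because its support is compact inside $V_\alpha$, it vanishes near the interior edges of the chart, so only the genuine faces $\{x_i = 0\}$ can contribute.

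The heart of the argument is then the local computation. Writing $\eta = \sum_{i=1}^n (-1)^{i-1} f_i \, dx_1 \wedge \cdots \wedge \widehat{dx_i} \wedge \cdots \wedge dx_n$, one gets $d\eta = \left(\sum_i \partial f_i / \partial x_i\right) dx_1 \wedge \cdots \wedge dx_n$. Applying Fubini and the fundamental theorem of calculus in each $x_i$-direction, the compact support annihilates all boundary terms as $x_i \to +\infty$, and only the faces $\{x_i = 0\}$ survive; these are precisely the codimension-one corner $\mathbb{R}^n_+(1)$ of the model. With the outward-normal-first orientation convention on each face, the signs assemble to $\int_{\mathbb{R}^n_+} d\eta = \int_{\mathbb{R}^n_+(1)} \eta$. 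Summing over $\alpha$, and using that $\sum_\alpha \rho_\alpha = 1$ also along $W(1)$, recovers the asserted equality $\int_{W(1)} \omega = \int_W d\omega$.

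The main obstacle is the orientation bookkeeping together with the verification that the higher corners genuinely play no role. One must check that the induced orientations on the codimension-one faces in the local models are compatible with the transition maps and therefore glue to a global orientation of $W(1)$, so that the local contributions add up coherently rather than cancel. One must also confirm that the corners $W(k)$ with $k \geq 2$ contribute nothing, both to convergence (they are measure zero inside $W(1)$) and to the identity (the fundamental theorem of calculus in the model detects only the codimension-one faces). Once these points are in place, the remainder is the routine extension of the classical Stokes argument, which the excerpt has already granted us as straightforward for manifolds with corners.
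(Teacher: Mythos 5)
Your proof is correct. The paper offers no argument for this observation at all---it is stated as an instance of the ``straightforward'' extension of integration theory and Stokes' theorem to manifolds with corners---and your partition-of-unity reduction to the model $\mathbb{R}^n_+$, with the Fubini/fundamental-theorem-of-calculus computation on the faces $\{x_i=0\}$, the outward-normal orientation bookkeeping, and the measure-zero dismissal of the corners $W(k)$ for $k\geq 2$ (which also settles convergence of the integral over $W(1)$), is precisely the standard argument that the paper's remark implicitly invokes.
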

  
  If $f: W\to M^p$ is a smooth map from a manifold with corners $(W, W(k))$ to a manifold $M^p$ and $N^{p-r} \subset M^p$ is  the image by an injective immersion of a manifold, $l: N\to M,$  one says that 
  $f$ is transversal to $N$ or to $l$  and one writes $f\pitchfork N$ or $f\pitchfork l,$
if the restriction $f|_{W(k)}$ of $f$ to $W(k)$ remains transversal to $N$ or $l$ for any $k.$ If this is the case one has:
  
\begin{obs}\label {O43}\

 If $f\pitchfork N$then  $f^{-1}(N)$  is a manifold with corners whose $k-$corner  is  $(f|_{W(k)})^{-1}(N).$ 
\end{obs}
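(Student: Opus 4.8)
The plan is to reduce the statement to a corner-aware version of the regular value theorem and then to a purely local computation in the standard chart $\mathbb R^n_+$. Since the assertion is local on $W$, I would first replace $N$ near any point $q = f(w) \in N$ by a local defining submersion: because $N$ (or the image of the injective immersion $l$) has codimension $r$, there is an open set $U \ni q$ in $M$ and a submersion $g : U \to \mathbb R^r$ with $N \cap U = g^{-1}(0)$. Setting $h := g \circ f$ on $f^{-1}(U)$ gives $f^{-1}(N) \cap f^{-1}(U) = h^{-1}(0)$, and the transversality hypothesis $f \pitchfork N$ translates, stratum by stratum, into the statement that $h|_{W(k)}$ is a submersion at every point of $h^{-1}(0) \cap W(k)$, for every $k$: indeed $f|_{W(k)} \pitchfork g^{-1}(0)$ at $w$ is equivalent to $dg$ carrying $d(f|_{W(k)})_w(T_w W(k))$ onto $\mathbb R^r$, i.e.\ to surjectivity of $d(h|_{W(k)})_w$. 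It therefore suffices to prove: if $h : (W, W(k)) \to \mathbb R^r$ is smooth and $0$ is a regular value of each restriction $h|_{W(k)}$, then $h^{-1}(0)$ is a manifold with corners whose $k$-corner is $(h|_{W(k)})^{-1}(0)$.

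For the local computation, fix $w \in h^{-1}(0)$ lying in $W(k)$, and choose a chart in which a neighborhood of $w$ is modeled on $\mathbb R^n_+$ with coordinates $(x_1, \dots, x_n)$ so that $x_1 = \dots = x_k = 0$ and $x_{k+1}, \dots, x_n > 0$ at $w$; near $w$ the only strata that occur are $\{x_i = 0 : i \in S\}$ for $S \subseteq \{1, \dots, k\}$, of depth $|S|$. The deepest stratum through $w$ is $W(k) = \{x_1 = \dots = x_k = 0\}$ locally, and submersivity of $h|_{W(k)}$ at $w$ means $\partial h / \partial(x_{k+1}, \dots, x_n)$ has rank $r$ there; in particular this forces $n - k \ge r$, so that whenever $n - k < r$ the hypothesis leaves $h^{-1}(0) \cap W(k)$ empty and there is nothing to check at that depth. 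After relabeling the free coordinates I would arrange that $\partial h / \partial (x_{k+1}, \dots, x_{k+r})$ is invertible at $w$, and then apply the implicit function theorem to a smooth extension of $h$ to an open neighborhood in $\mathbb R^n$, solving $h = 0$ for $(x_{k+1}, \dots, x_{k+r})$ as a smooth function of the remaining variables. Because the solved-for coordinates are positive at $w$, the resulting graph stays inside $\mathbb R^n_+$ near $w$, and the corner-defining coordinates $x_1, \dots, x_k$ survive as independent parameters.

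This exhibits $h^{-1}(0)$ near $w$ as the graph of a smooth map over $\{x_1, \dots, x_k \ge 0\} \times \mathbb R^{\,n-k-r}$, i.e.\ as a corner chart modeled on $\mathbb R^{\,n-r}_+ \cong \mathbb R^{k}_+ \times \mathbb R^{\,n-k-r}$. Reading off the strata, the locus in $h^{-1}(0)$ where exactly $j$ of $x_1, \dots, x_k$ vanish is the intersection $h^{-1}(0) \cap W(j)$, which is again a graph over the surviving coordinates and hence a smooth $(n-r-j)$-manifold; this is precisely $(h|_{W(j)})^{-1}(0)$, as required. Transition maps between two such charts are restrictions of smooth maps on open subsets of $\mathbb R^n$ and automatically preserve the stratification, so the local models patch to give $f^{-1}(N)$ the structure of a manifold with corners with $k$-corner $(f|_{W(k)})^{-1}(N)$.

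The step I expect to be the real obstacle is the bookkeeping in the local normal form: one must verify that the transversality of the \emph{deepest} stratum through $w$ is the governing condition and that the implicit function theorem can always be carried out by solving for $r$ of the strictly positive coordinates while keeping every corner-defining coordinate free. This is what guarantees that the boundary hyperplanes $\{x_i = 0\}$ pass to genuine corner faces of $h^{-1}(0)$ rather than being cut transversally into the interior; the compatibility of the submersivity hypotheses across all shallower strata, which is exactly what is built into the definition of $f \pitchfork N$, is what makes this bookkeeping consistent as $w$ ranges over the different corners.
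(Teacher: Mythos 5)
Your proof is correct, and there is essentially nothing in the paper to compare it against: Observation \ref{O43} is stated without any proof, as an instance of the ``straightforward'' extension of differential calculus from manifolds and manifolds with boundary to manifolds with corners, so your argument fills in a claim the paper leaves entirely implicit. The route you take is the standard one --- replace $N$ locally by a defining submersion $g$ so that the statement becomes a regular-value theorem for $h = g\circ f$, extend $h$ smoothly across the corner, and apply the implicit function theorem solving for $r$ of the \emph{strictly positive} coordinates so that the corner coordinates $x_1,\dots,x_k$ survive as free parameters; the graph description then exhibits both the local chart modeled on $\mathbb{R}^{n-r}_+$ and the identification of the $j$-corner with $(f|_{W(j)})^{-1}(N)$. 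Your closing remark is also the right key point: transversality along the deepest stratum through a point controls the whole local picture (by openness of the rank condition it propagates to all shallower strata nearby), while the hypothesis on every $W(k)$ is what is needed globally. Two loose ends remain, both inherited from the paper's own imprecision rather than defects in your argument: (i) when $N$ is merely the image of an injective immersion $l$, a defining submersion exists only for embedded local pieces of $l(N)$, and the preimage should properly be read as the fiber product $\{(w,u)\in W\times N \mid f(w)=l(u)\}$, since otherwise distant sheets of the immersed image can accumulate near a point of $f^{-1}(l(N))$ and destroy the local product structure; (ii) the paper's definition of a manifold with corners additionally requires that the topological closure of each connected component of each $W(k)$ be an $\mathbb{R}^{n-k}_+$-manifold, a condition your charts do not explicitly verify --- it follows by running your argument inductively on the restrictions of $f$ to the closures of corner components of $W$, which are themselves manifolds with corners to which $f$ remains transversal.
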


For a  manifold with corners $(W, W(k))$ one denotes by  $\partial W=\sqcup _{k\geq 1} W(k)$  the {\it boundary} of $W;$ actually $(W, \partial W)$ is a topological manifold with boundary which is smoothable.  Precisely, it admits a rounding-corner homeomorphisms to a smooth manifold with boundary 
 A {\bf rounding-corners} homeomorphism is a smooth homeomorphism $h: (W, \partial W)\to (N,\partial N),$ with $(N,\partial N)$ a  smooth manifold with boundary which for any $k$ restricts to a  diffeomorphism onto the image and the stratification of $\partial N$ defined by $h(W(k))$ is  regular (=Whitney) stratification.
All  rounding-corners homeomorphism provide a unique smooth structure of smooth manifold with boundary on $(W,\partial W)$ \footnote {Precisely,  for two such rounding-corners homeomorphisms $h_i: (W,\partial W)\to (N,\partial N),$ $i=1,2$ there exists diffeomorphisms $\lambda: N_1 \to N_2,$  unique up to isotopy s.t. $\lambda \cdot h_2$ and $h_1$ are isotopic as homeomorphisms}. 

\section {The completion / compactification theorem} \label {S6}

Recall that given $x, y\in \mathcal X$ and $x\ne y$ one writes $x>y$ iff $\mathcal T(x,y)\ne \emptyset$ 
and if the vector field $X$ is a M-S vector field  one defines 
 $i(x,y):= i(x)-i(y).$
\vskip .1in

Suppose $X$ is an M-S vector field.

\begin{itemize}
           
          \item A point $p\in W^{\mp}_x$ can be specified by the restrictions to $(-\infty.0]$ resp. $[0,\infty)$ of the trajectory $\gamma_p (t)$ with $\lim_{t\to -\infty}=x$ resp.  $\lim_{t\to +\infty}=x ,$  restrictions denoted by $\gamma^{\mp}_p,$ and a point $p\in \mathcal M(x,y)$ can be specified by an instanton $\gamma\in \mathcal T(x,y)$ with a marked point $p\in\gamma.$      
See Figure 1 (a).        
                    
            \item  A {\it $k-$broken instanton} from $x \in \mathcal X$ to $y \in \mathcal X$  at breaking points $y_1 , y_2, \cdots y_k \in \mathcal X,$\  $ x  >y_1 >\cdots y_k >y$ is an element of the set $$\mathcal T(x,y_1) \times \mathcal T(y_1,y_2) \cdots \mathcal T(_{k-1},y_k)\times \mathcal T(y_k,y),$$ hence a juxtaposition of  instantons $(\gamma^1, \gamma^2, \cdots, \gamma ^{k+1}), \gamma^i\in \mathcal T(y_{i-1}, y_i).$ {\it Juxtaposition} here means
 $ \lim_{t\to \infty} \gamma^i(t)= \lim_{t\to -\infty} \gamma^{i+1}(t)= y_i,$ with $y_0= x$ and $y_{k+1}=y$  cf. Figure 1. (b).

           \item  A $k-$broken trajectory  from $x \in \mathcal X$ to $p\in M\setminus \mathcal X$  at breaking points $y_1 , y_2, \cdots, y_k\in \mathcal X,$ 
$ x >y_1 >\cdots > y_k ,$ is an element of the set $$\mathcal T(x,y_1) \times \mathcal T(y_1,y_2) \cdots \times \mathcal T(_{k-1},y_k)\times W^-_{y_k},$$ hence a juxtaposition  $(\gamma^1, \gamma^2, \cdots, \gamma ^{k}, \gamma^-_p), \gamma^i\in \mathcal T(y_{i-1}, y_i),$  $p\in W^-_{y_k},$ $y_0= x$, cf. Figure 1 (c).

 Similarly, 
 a $k-$broken trajectory  from  $p\in M \setminus \mathcal X$ to $y\in \mathcal X$   at breaking points $y_1 , y_2, \cdots y_k\in \mathcal X,$ $y_1>y_2\cdots y_{k} > y,$ is an element of the set $$\mathcal  W^+_{y_1}\times \mathcal T(y_1,y_2) \cdots \times \mathcal T(y_{k},y),$$ hence a juxtaposition  $(\gamma_p^+,\gamma^{1},  \cdots, \gamma ^{k+1}),$ $p\in W^+_{y_1},$ $\gamma^i\in \mathcal T(y_{i-1}, y_i),$  $y_{k+1}=y,$ cf. Figure 1 (d).
 
          \item A pointed $k-$broken instantons from $x\in \mathcal X$ to $y \in \mathcal X$  at breaking points $y_1 , y_2, \cdots y_k \in \mathcal X,$  $ x >y_1 >\cdots y_k >y,$ with marked point $p\in \mathcal M(y_r, y_{r+1}),$ is an element of the set 
$$
 \mathcal T(x, y_1)\cdots  \times \mathcal T(y_{r-1},y_r) \times \mathcal M(y_r, y_{r+1})\times \mathcal T(y_{r+1},y_{r+2})\cdots \times   \mathcal T(y_k,y).$$
 
It can be also represented by a pair 
of two elements the first  
$(\gamma^1, \gamma^2, \cdots \gamma ^{r}, \gamma^-_{p})$  
and $(\gamma^+_p,\gamma^{r+2} \cdots, \gamma ^{k+1}),$  $p\in \mathcal M(y_r, y_{r+1}),$ $x=y_0,$ $y= y_{k+1},$ cf. Figure1 (e). 
\vskip .1in

        \item A pointed   $k-$broken instantons from $x\in \mathcal X$ to $y \in \mathcal X$  at breaking points $y_1 , y_2, \cdots y_k,$  with marked point $p=y_r\in \mathcal X$ $r=0,1,\cdots y_{k+1},$ is an element of the set 
$$
= \mathcal T(y_0,y_1)\cdots  \times \mathcal T(y_{r-1},y_r) \times \mathcal M(y_r, y_r)\times \mathcal T(y_r, y_{r+1})\times \cdots \times \mathcal   \mathcal T(y_{k},y_{k+1})
,$$ $y_0=x, \ y_{k+1}=y.$  

Note that for any $y_r\in \mathcal X,$ \  $\mathcal M(y_r, y_r)= \{y_r\}.$ 
 \end{itemize}
\hskip .5in
\begin{figure}[h]
\center
\includegraphics [height=8.5cm]{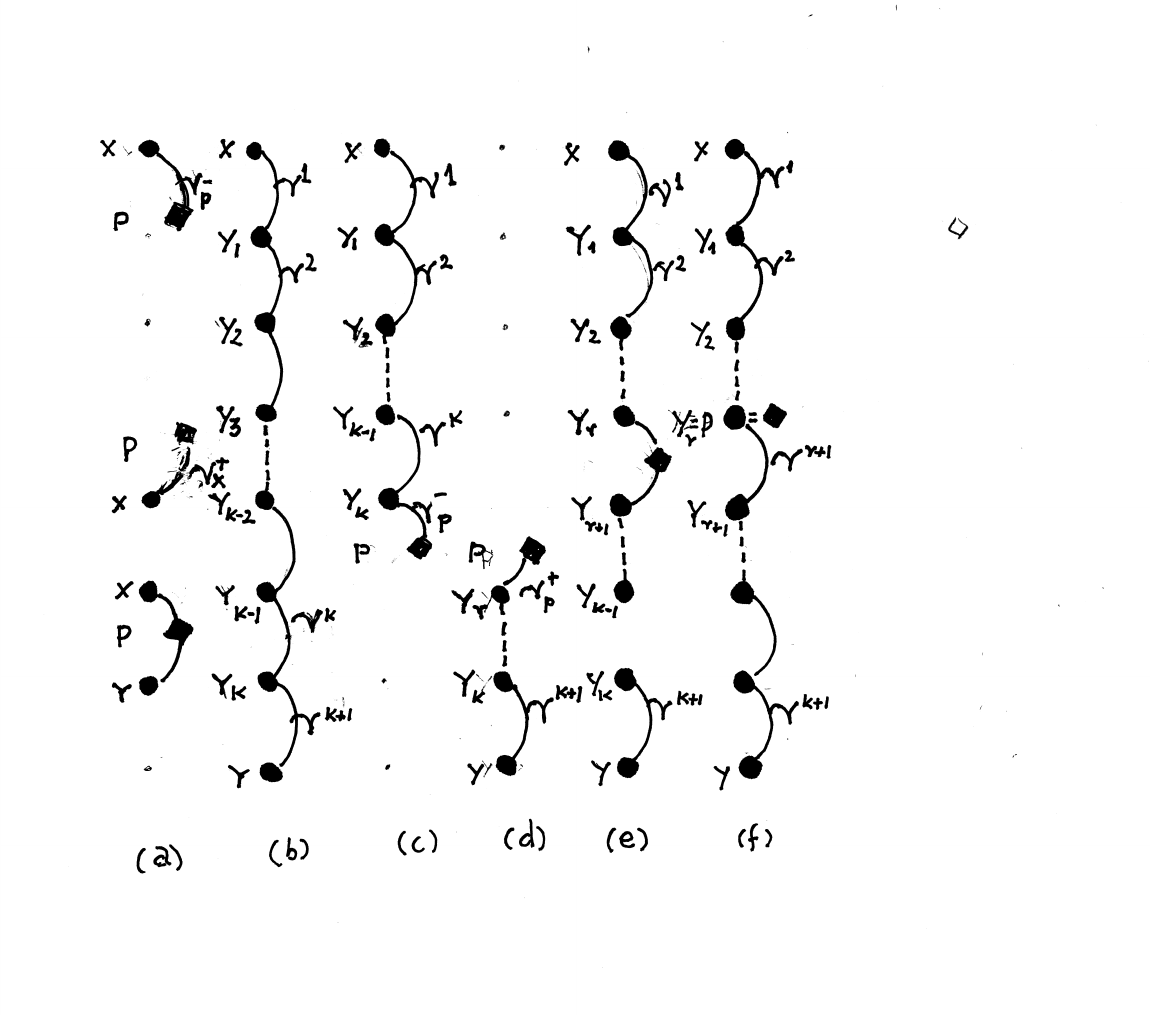}
\caption{a,c.b,d,e,f}
\end{figure}

\vskip .1in
Define :
\begin{itemize}

\item $\hat{ \mathcal T}(x,y)$ by 
\begin{enumerate}
\item $\hat {\mathcal T}(x,y)(0)= \mathcal T(x,y),$
\item $\hat {\mathcal T}(x,y)(k):= \bigsqcup_{\{ y_1,\cdots,y_k\mid x >y_1>\cdots, y_k >y\}} \mathcal T (x ,y_1)\times \mathcal T(y_1, y_2)\times \cdots \mathcal T( y_k,y)$
\item $\hat {\mathcal T}(x,y): = \sqcup _{\{k= 0,1, \cdots, i(x,y)-1\} }\hat {\mathcal T}(x,y)(k). $
\end{enumerate}

\item  $\hat W^-_x, \hat i^-_x: \hat W^-_x \to M$  by 
\begin{enumerate} 
\item $\hat W^-_x(0)= W^-_x,$  
\item  $\hat W^-_x(k):= \bigsqcup_{\{y_1 >y_2 \cdots  > y_k \mid x>y_1\}} \mathcal T(x,y_1) \times \mathcal T(y_1,y_2) \cdots \mathcal T(_{k-1},y_k)\times W^-_{y_k},$  
\item $\hat W^-_x: = \sqcup _{k= 0,1, \cdots, i(x)} \hat W^-_x (k), $ 
\item ${\hat i^-_x} |_{\hat W^-_x (k)}:= i^-_{y_k} \cdot p_{W^-_{y_k}}$  with $p_{W^-_{y_k}}$ the projection on $W^-_{y_k}.$  
\end{enumerate}

\item  $\hat W^+_y, \hat i^+_y: \hat W^+_y \to M$  by 
\begin{enumerate} 
\item $\hat W^+_y(0)= W^+_y,$  
\item  $\hat W^+_y(k):= \bigsqcup_{\{y_1 >y_2 \cdots  > y_k \mid  y_k>y\}} W^+_{y_1} \times \mathcal T(y_1,y_2) \times \mathcal T(y_1,y_2) \cdots \mathcal T(y_k,y),$  
\item $\hat W^+_y: = \sqcup _{\{k= 0,1, \cdots, n-i(y)\}} \hat W^+_y (k), $ 
\item $\hat i^+_y|_{\hat W^+_y(k)}:= i^+_{y_1} \cdot p_{W^+_{y_1} }$  with $p_{W^+_{y_1}}$ the projection on $W^-_{y_1}.$  
\end{enumerate}

\end{itemize} 

In order to describe $\hat{\mathcal M}(x,y),$  and $\hat i_{x,y} : \hat {\mathcal M}(x,y)\to M$  consider first 

\begin{enumerate} [label=(\alph*)]    
         \item $\mathcal M(x,x):= \{x\},$ 
         \item  $\mathcal M(y_0 >y_1 >\cdots >y_{k+1}):= \sqcup_{r=0,1,\cdots, k}  \mathcal T(y_0,y_1)\cdots  \times \mathcal T(y_{r-1},y_r) \times \mathcal M(y_r, y_{r+1})\times \mathcal T(y_{r+1},y_{r+2}) \times \cdots \mathcal T(y_k,y_{k+1}),$ 
         \item  $\mathcal M'(y_0 >y_1 >\cdots >y_{k+1}):= \sqcup_{r=0,1,\cdots k+1}  \mathcal T(y_0,y_1)\cdots  \times \mathcal T(y_{r-1},y_r) \times \mathcal M(y_r, y_r)\times\mathcal T(y_r, y_{r+1}) 
 \times \cdots 
 \mathcal T(y_{k},y_{k+1}),$  
\end{enumerate}
 \hskip .2in and define 

\begin{itemize}
\item $\hat{\mathcal M}(x,y),$  and $\hat i_{x,y} : \hat {\mathcal M}(x,y)\to M$ by 
\begin{enumerate}
\item $ \hat {\mathcal M} (x,y) (0):= \mathcal M$
 \item  $ \hat {\mathcal M} (x,y) (k):=  \begin{cases} \bigsqcup _{\{y_1 >\cdots >y_{k}
 \mid x>y_1, y_k >y\}}\mathcal M(x >y_1 >\cdots y_{k} > y) \  \sqcup \\
 \bigsqcup _{\{y_1 >\cdots >y_{k}\mid y_0=x>y_1, y_k >y_{k+1}=y\}}\mathcal M'(y_0 >y_1 >\cdots >y)\end{cases}$
 \item $ \hat {\mathcal M}(x,y):=\sqcup _{k=0,1\cdots i(x,y)} \hat{\mathcal M }(x,y)(k)$
 \item  $\hat i_{x,y}$ restricted to any component of $\hat {\mathcal M}(x,y),$  is the  composition of the projection on the $\mathcal M(\cdots)$  followed by the injective maps   $i_{\cdots}.$
\end{enumerate}
\end{itemize}

Since $X$ is an M-S vector field  all $\hat W^\mp_x (k), \hat {\mathcal M} (x,y) (k), \hat {\mathcal T}(x,y)(k)$ are smooth manifolds of dimension $\dim W^\pm_x -k, \dim \mathcal M(x,y)-k, \mathcal T(x,y)-k$ respectively. 
In view of the above definitions  one has:

\begin{obs}\label{O51}\

For any $x,y\in \mathcal X, x>y$  the set $\hat{\mathcal M}(x,y)$ identifies to the set of marked instantons  from $x$ to $y$ which is the subset  $(\hat i^-_x \times \hat i^+_y)^{-1} (\Delta(M))$
 of $\hat W^-_x\times  \hat W^+_y$ as described by the  commutative diagram
\begin{equation} \label {D1}
\xymatrix{ \hat W^-_x\times \hat W^+_y \ar[r]^{\hat i^-_x\times \hat i^+_y} & M\times M  \\
\hat{\mathcal M}(x,y)\ar[u]^{\subseteq} \ar[r]^{\hat i_{x,y}} &M\ar[u]^{\Delta}}\end{equation} 
with $\Delta$ the diagonal map. 
\end{obs}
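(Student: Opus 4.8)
The plan is to prove the identification by directly unwinding the definitions of $\hat W^-_x$, $\hat W^+_y$ and $\hat{\mathcal M}(x,y)$ given above, thereby reducing the statement to the defining identity $\mathcal M(u,v)=W^-_u\cap W^+_v$ together with the elementary dichotomy that the common image point of the two factors is either a regular point lying on a single instanton or a rest point. First I would describe an arbitrary element of $(\hat i^-_x\times \hat i^+_y)^{-1}(\Delta(M))$: it is a pair $(a,b)\in \hat W^-_x\times \hat W^+_y$ with $\hat i^-_x(a)=\hat i^+_y(b)$. By the definition of $\hat W^-_x(j)$, the element $a$ is a broken instanton $(\gamma^1,\dots,\gamma^j)$ running from $x$ down to a terminal rest point $u$, together with a point $q\in W^-_u$, and $\hat i^-_x(a)=i^-_u(q)$. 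Symmetrically, $b\in \hat W^+_y(\ell)$ consists of a point $q'\in W^+_v$ at an initial rest point $v$ followed by a broken instanton $(\delta^1,\dots,\delta^\ell)$ from $v$ down to $y$, with $\hat i^+_y(b)=i^+_v(q')$.

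Imposing $\hat i^-_x(a)=\hat i^+_y(b)=:p$ forces $q=q'=p$ and $p\in W^-_u\cap W^+_v=\mathcal M(u,v)$, and here I would split into two cases. If $p\notin \mathcal X$, then $p$ sits on a genuine trajectory from $u$ to $v$; as the Lyapunov function strictly decreases along it, $u>v$, and the pair $(a,b)$ is precisely a pointed broken instanton with marked point $p\in\mathcal M(u,v)$, i.e. an element of the $\mathcal M(\cdots)$ summand of $\hat{\mathcal M}(x,y)$. If instead $p\in\mathcal X$, then $p\in W^-_u$ forces $p=u$ and $p\in W^+_v$ forces $p=v$, so $u=v=p$ and $p\in\mathcal M(u,u)=\{p\}$; this is exactly an element of the $\mathcal M'(\cdots)$ summand, with the mark sitting at the rest point $p$. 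Conversely, given any pointed broken instanton in $\hat{\mathcal M}(x,y)$ I would cut it at its marked point $p$ into an incoming half, which is a broken instanton from $x$ to the last rest point $u$ above $p$ together with $p$ regarded as a point of the relevant $W^-$, and an outgoing half in $\hat W^+_y$; these two halves have common image $p$ and recover $(a,b)$. This pair of mutually inverse assignments is the desired bijection, and commutativity of diagram~(\ref{D1}) is immediate, since both composites carry a pointed broken instanton to $(p,p)\in\Delta(M)$, the map $\hat i_{x,y}$ being by construction the projection onto the marked $\mathcal M(\cdots)$ factor followed by its immersion into $M$.

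I expect no essential difficulty here, only careful bookkeeping of the corner indices: one must check that a pair in $\hat W^-_x(j)\times\hat W^+_y(\ell)$ lands in the stratum $\hat{\mathcal M}(x,y)(k)$ with $k=j+\ell$ in the regular case and $k=j+\ell-1$ in the rest-point case, the drop by one being exactly absorbed by the collapse $\mathcal M(u,u)=\{p\}$, and that the top-stratum dimension count is consistent, namely $\dim W^-_x+\dim W^+_y-\dim M=i(x)-i(y)=\dim\mathcal M(x,y)$. Finally, to upgrade this set-level identification to one of manifolds with corners one invokes the Morse--Smale transversality of $i^-_x$ and $i^+_y$ on every corner, which makes $\hat i^-_x\times \hat i^+_y$ transverse to $\Delta(M)$, so that by Observation~\ref{O43} the preimage is a manifold with corners whose $k$-corner is cut out stratum by stratum, matching the definition of $\hat{\mathcal M}(x,y)(k)$.
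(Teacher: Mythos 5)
Your overall strategy is exactly the paper's own: Observation~\ref{O51} is stated there with no proof beyond ``in view of the above definitions,'' and your unwinding --- a pair $(a,b)$ with common image $p\in W^-_u\cap W^+_v=\mathcal M(u,v)$, split into the case $p\notin\mathcal X$ (producing the $\mathcal M(\cdots)$ summands) and $p\in\mathcal X$ (forcing $u=v=p$ and producing the $\mathcal M'(\cdots)$ summands), inverted by cutting a pointed broken instanton at its mark --- is precisely that unwinding. At the level of sets, together with the evident commutativity of diagram~(\ref{D1}), this is all the Observation asserts, and that part of your argument is correct. One small remark: you do not need the Lyapunov function to conclude $u>v$ in the regular case; the trajectory through $p$ is itself a nonconstant instanton from $u$ to $v$ (a homoclinic $u=v$ being excluded by transversality, since $\mathcal M(u,u)$ would then be $0$-dimensional yet flow-invariant and nondiscrete), so $\mathcal T(u,v)\ne\emptyset$, which is the definition of $u>v$; this matters because the Observation is stated for arbitrary M-S fields.

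Your corner bookkeeping, however, contains an off-by-one error in the rest-point case, and it is worth flagging because it contradicts your own appeal to Observation~\ref{O43} in the same paragraph. By the product formula~(\ref{E2}), a pair in $\hat W^-_x(j)\times\hat W^+_y(\ell)$ lies in the $(j+\ell)$-corner of $\hat W^-_x\times\hat W^+_y$, so once $\hat i^-_x\times\hat i^+_y\pitchfork\Delta(M)$ is granted, Observation~\ref{O43} places the corresponding point of the preimage in its $(j+\ell)$-corner --- in the rest-point case just as in the regular one, not in the $(j+\ell-1)$-corner as you claim. The dimension count confirms this: an $\mathcal M'$-type component over a chain with $m$ intermediate rest points is a product of $m+1$ instanton spaces and a point, hence has dimension $i(x,y)-(m+1)$ and must be part of the $(m+1)$-corner; your rest-point pair has $m=j+\ell-1$, so it sits in corner $j+\ell$. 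Concretely, when $i(x,y)=1$, $\hat{\mathcal M}(x,y)$ is a disjoint union of closed intervals, and the endpoint ``mark at $x$'' of an interval arises from $j=0$, $\ell=1$: it is a boundary ($1$-corner) point, whereas $j+\ell-1=0$ would place it in the interior. In fairness, the paper's displayed definition of $\hat{\mathcal M}(x,y)(k)$ indexes the $\mathcal M'$-summands by chains of length $k$, which is the same slip (its own dimension statement ``$\dim\hat{\mathcal M}(x,y)(k)=\dim\mathcal M(x,y)-k$'' and Theorem~\ref{T62} only hold with the $\mathcal M'$-summands indexed by chains of length $k-1$); so you inherited the typo rather than invented it, but a correct proof should fix the indexing rather than reproduce it. None of this affects your set-level identification, which is what the Observation claims.
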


\begin{theorem}  \label {T62} (cf. \cite{Bu3},\cite {BFK}, \cite {BH}) \              
\begin{enumerate}
\item Suppose $X$ is an M-S vector field. Then $\hat W^\mp_x, \hat {\mathcal M}(x,y), \hat{\mathcal T}(x,y)$  have a structure of orientable smooth manifolds with corners whose $k-$corners are 
$\hat W^\mp_x (k), \hat {\mathcal M}(x,y)(k), \hat {\mathcal T}(x,y)(k)$  and  $\hat i^\mp_x$ and $\hat i_{x,y}$ described above are smooth maps.  Moreover  all these manifolds  have stably trivial tangent bundle
and the maps ${\hat i}_{\cdots} $ restricted to any component of the $k-$corner are of constant rank and submersion over their image in $M.$
\item If  $f:M\to \mathbb R$ is a   Lyapunov function  for the vector field $X,$  which is either proper and bounded from below or the lift of a closed differential one form  on a closed manifold  (cf. section 2),  
then  each $\hat W^\mp_x, \hat {\mathcal M}_{x,y}, \hat {\mathcal T}(x,y)$  is compact.
\end{enumerate}
\end{theorem}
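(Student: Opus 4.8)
\emph{Reduction.} The plan is to derive everything in Part 1 from a single local model --- a chart around a $k$-broken trajectory --- and to establish Part 2 by a limiting argument driven by the Lyapunov function. By Observation \ref{O51} we have $\hat{\mathcal M}(x,y)=(\hat i^-_x\times \hat i^+_y)^{-1}(\Delta(M))$, so once the manifold-with-corners structures on $\hat W^-_x$ and $\hat W^+_y$ are in hand, together with transversality of $\hat i^-_x\times\hat i^+_y$ to the diagonal, Observation \ref{O43} hands us the structure on $\hat{\mathcal M}(x,y)$ with the advertised $k$-corners; the space $\hat{\mathcal T}(x,y)$ is then obtained by quotienting out the free $\mathbb R$-action on the open stratum $\mathcal M(x,y)\cong\mathcal T(x,y)\times(0,1)$ and is handled by the same gluing below. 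Thus the whole of Part 1 rests on producing a chart around a $k$-broken trajectory of $\hat W^\mp_x$.

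\emph{The gluing chart (main obstacle).} Fix a broken trajectory through $x=y_0>y_1>\cdots>y_k$ ending at $p\in W^-_{y_k}$. Near each hyperbolic rest point $y_i$ I would use the Hadamard--Perron local stable/unstable structure together with the inclination ($\lambda$-)lemma to describe how a nearby genuine trajectory enters and leaves a small isolating neighborhood of $y_i$. The exponential contraction/expansion produces, for each breaking index $i=1,\dots,k$, a gluing parameter $\tau_i\in[0,\epsilon)$ measuring how nearly the trajectory passes through $y_i$: setting $\tau_i=0$ forces the $i$-th break to persist, while $\tau_i>0$ resolves it. Together with the remaining free parameters along the factors $\mathcal T(y_{i-1},y_i)$ and $W^-_{y_k}$, the data $(\tau_1,\dots,\tau_k)$ yield an $\mathbb R^{\,i(x)}_+$-chart whose $k$-corner is exactly $\hat W^-_x(k)$. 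Smooth dependence of the glued trajectory on all parameters comes from the implicit function theorem applied to the matching conditions at the entry/exit windows, where Morse--Smale transversality makes the relevant linearized matching map onto. This gluing step is the technical heart, and I expect it to be the main obstacle, since the corner coordinates must be controlled by exponential-decay estimates that are uniform across the isolating neighborhoods.

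\emph{Global properties.} Given the charts, smoothness of $\hat i^\mp_x$ and $\hat i_{x,y}$ and the claim that they restrict on each corner to a constant-rank submersion onto the image are immediate from their definition as a projection onto the terminal $W^\mp$-factor followed by the immersion $i^\mp_{\cdots}$; the transversality of $\hat i^-_x\times\hat i^+_y$ to $\Delta$ on every stratum is the stratum-wise Morse--Smale condition and is recorded in this submersion statement. For stable triviality I would read off from the fiber-product description that the normal bundle of $\mathcal M(x,y)$ in $W^-_x\times W^+_y$ is the pullback of $TM$ along the diagonal, so with $f:\mathcal M(x,y)\to M$ the immersion
\[
T\mathcal M(x,y)\oplus f^\ast TM \;\cong\; TW^-_x|_{\mathcal M}\oplus TW^+_y|_{\mathcal M}.
\]
Since $W^-_x\cong\mathbb R^{i(x)}$ and $W^+_y\cong\mathbb R^{\,n-i(y)}$ are contractible, the right-hand side is trivial and $f$ is null-homotopic, so $f^\ast TM$ is trivial; hence $T\mathcal M(x,y)$ is stably trivial, and the same follows for $\hat W^\mp_x$ and, via $\mathcal M\cong\mathcal T\times(0,1)$, for $\mathcal T(x,y)$. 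Stable triviality yields orientability; I would then fix orientations on the $W^\mp_x$ and the $\mathcal T(x,y)$ and check that the product orientations of the corner strata assemble coherently, which is the usual sign-bookkeeping along the gluing coordinates $\tau_i$.

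\emph{Part 2: compactness.} With a Lyapunov $f$, the value $f$ is strictly decreasing along every nonconstant trajectory, so along any chain $f(x)>f(y_1)>\cdots>f(y_k)>f(y)$ the $f$-values are strictly monotone; properness (or compactness of $M$) then forces $\sharp\mathcal X<\infty$ and confines all trajectories involved to a compact sublevel region. Compactness of $\hat{\mathcal M}(x,y)$ follows by the standard limiting argument: parametrizing instantons by the value of $f$ over $(f(y),f(x))$ and applying an Arzel\`a--Ascoli argument to the equation $d\gamma/dt=X(\gamma)$, any sequence has a subsequence converging on compact subintervals to a limiting trajectory; Lyapunov monotonicity guarantees that no $f$-value escapes and that the limit can break only at intermediate rest points, of which there are finitely many, so the limit already lies in $\hat{\mathcal M}(x,y)$. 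Hence $\hat{\mathcal M}(x,y)$ is sequentially compact, and the same argument applied to the defining products gives compactness of $\hat W^\mp_x$ and $\hat{\mathcal T}(x,y)$.
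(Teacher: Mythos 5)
Your proposal overlaps with the paper's actual proof precisely in the places where the paper gives arguments, and there you are correct: the reduction of $\hat{\mathcal M}(x,y)$ to the fiber product $(\hat i^-_x\times\hat i^+_y)^{-1}(\Delta(M))$ via Observations \ref{O51} and \ref{O43}, with transversality checked stratum-by-stratum from the Morse--Smale condition, is exactly the paper's argument; and your stable-triviality computation (normal bundle of the fiber product is the pullback of $TM$, both $TW^-_x|_{\mathcal M}$ and $TW^+_y|_{\mathcal M}$ and $f^\ast TM$ trivial by contractibility of $W^\mp$) is the same bundle arithmetic the paper writes as $\tau\oplus\nu^-\sim\epsilon^{i(x)}$, $\tau\oplus\nu^+\sim\epsilon^{n-i(y)}$, $\epsilon^{n+i(x)-i(y)}\sim\epsilon^n\oplus\tau$. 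One economy the paper makes that you miss: $\hat W^+_x$ for $X$ is $\hat W^-_x$ for $-X$ (with $-f$ Lyapunov), so the stable side needs no separate treatment.

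The genuine gap is that the load-bearing step of Part 1 --- the smooth manifold-with-corners structure on $\hat W^-_x$ and $\hat{\mathcal T}(x,y)$ --- is left as a plan. The paper does not prove it either; it cites \cite{Bu2}, \cite{BFK}, \cite{BH}, where this construction is the main content (and is long). Your sketch names the right ingredients (Hadamard--Perron, the $\lambda$-lemma, breaking parameters $\tau_i$, implicit function theorem), but the actual difficulty is not the existence of parameters resolving a break: it is showing that the $\tau_i$ can be chosen so that transition maps between gluing charts centered at different broken trajectories, and between gluing charts and interior flow charts, are \emph{smooth} up to the corner, so that one gets a smooth corner structure rather than a merely topological or Lipschitz one, with $\hat i^-_x$ smooth and of constant rank on each stratum. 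Naive choices (flow time, $f$-drop) fail to be smooth at the corner, and this compatibility-of-charts problem is exactly what the cited references labor over; your proposal does not engage it. A second, smaller gap is in Part 2: your Arzel\`a--Ascoli argument is fine for a proper Lyapunov function bounded from below, but the theorem's other hypothesis --- $f$ the lift of a closed one form on a closed manifold --- is not covered by it. In that case $f$ lives on a noncompact covering $\tilde M$, is \emph{not} proper once the group of periods has rank $\geq 2$, and $\mathcal X$ is infinite, so ``properness (or compactness of $M$) forces $\sharp\mathcal X<\infty$'' is false there; compactness must instead be obtained by projecting trajectories to the compact base, extracting limits downstairs, and controlling which lifted rest points can occur as breakings --- again the content of the references the paper invokes.
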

\vskip .in
\begin{proof}

1.  The proof  that $(\hat W^-_x, \hat W^-_x(k))$ and  $(\hat {\mathcal T}(x,y), \hat {\mathcal T}(x,y) (k))$are manifold with the corners and $\hat i^-_x$ and $\hat i_{x,y}$ are smooth maps 
with the properties as stated in Theorem \ref{T62} 
was done in details in \cite{Bu2}, \cite{BFK} and \cite{BH}. The same remains true  for  $(\hat W ^+_x, \hat W^+_x(k))$ and $i^+_x$ in view of the observation that $W^+_x,$ with respect to the vector field $X,$ is actually $W^-_x$ with respect to  the vector field $-X,$ which remains M-S  with $-f $ as Lyapunov function.
The statements  for $\hat {\mathcal M }(x,y)$ and $\hat i_{x,y}: \hat{\mathcal M}(x,y)\to M$  follow from Observation \ref{O51}, 
once one verifies the transversality  $ \hat i^-_x\times \hat i^+_y \pitchfork \Delta(M).$ Indeed, the transversality at  $p= \Delta(x) \in \Delta(M),$  $x\in \mathcal X,$  follows from the transversality of $W^-_x$ and $W^+_x$ and at $p\in \Delta (\mathcal M(y_{r-1}, y_r))$   from the transversality of $W^-_{y_{r-1}}$ and $W^+_{y_r}.$   
 
\vskip .1in 
 In order to establish that   $\mathcal  M(x,y)$ and then of $\mathcal T (x,y)$  have stably trivial  tangent bundle one proceeds as follows. 
One denotes by : 
 \begin{enumerate} 
  \item $\tau$ the tangent bundle of $\mathcal M(x,y),$
  \item $\nu^-$ the normal bundle  $\mathcal M(x,y)$ in $W^-_x,$
 \item  $\nu^+$ the normal bundle  $\mathcal M(x,y)$ in $W^+_y.$
\end{enumerate}
Since  the tangent bundles of  $W^-_x$ and $W^+_y$  restricted to $\mathcal M(x,y)$ are trivial of rank $i(x)$and $n-i(y)$ one has: 
\begin {enumerate} [label=(\roman*)]
\item $\tau\oplus \nu^-_x,$ 
the tangent bundle of $W^-_x$ restricted to $\mathcal M(x,y),$ is isomorphic to $\epsilon^{i(x)},$ 
\item $\tau\oplus \nu^+_y\sim \epsilon^{n-i(y)},$  the tangent bundle of $W^+_y$ restricted to $\mathcal M(x,y),$ is isomorphic to $\epsilon^{n-i(y)},$ 
\end{enumerate} 
with $\epsilon ^k$ denoting the trivial vector bundle of rank $k.$

In view  of transversality $W^-_x\pitchfork W^+_y$ one has
$ \epsilon^{i(x)}\oplus \epsilon ^{n-i(y)}\simeq T(M) |_{\mathcal M(x,y)}\oplus \tau \sim \epsilon ^n \oplus \tau$  
\footnote {since $\mathcal M(x,y)$ is a subset of contractible space}.

Items (i) and (ii) above  implies 
$\epsilon ^{n+i(x)-i(y)} \sim \epsilon ^n\oplus \tau,$ which implies that $\tau$ is stably trivial, hence $\mathcal M(x,y)$ is a stably parallelizable manifold, hence $\hat{\mathcal M}(x,y)$ is a stably parallelizable manifold. The same holds for $\mathcal T(x,y)$ and $\hat {\mathcal T}(x,y).$ 
\vskip .2in 

2. The compacity statements follow from the compacity of $\hat W^\pm_x$ , $\hat {\mathcal M}(x,y)$ and of $\hat {\mathcal T} (x,y)$ established in   \cite  {Bu3},  \cite {BFK}, or  \cite{BH}.
 \end{proof} 
   
\section {Orientability and orientations  associated with a M-S vector field} \label{S7}

For a rank $k$ real vector bundle $\xi: E\to M$ over a space $M$ denote by $\Lambda (\xi)$ the line bundle $\Lambda (\xi):= \Lambda^k(E)\to M.$  Recall that one  calls the bundle $\xi$ orientable iff $\Lambda(\xi)$ has nonzero sections (equivalently is trivial), and in this case, an equivalence class of nonzero sections \footnote{One says that the  sections $s_1$ and $s_2$ are equivalent iff $s_1= f \cdot s_2$ for some  $f$ a positive  real-valued function}
is called {\it orientation} and denoted by $o.$ 
If the vector bundle $\xi$ is orientable  then  its dual $\xi^\ast$ is orientable, and an orientation $o$ for $\xi$ determines an orientation $o^{-1}$ of $\xi^\ast.$ 

If the space $M$ is connected, and the vector bundle $E\to M$ is orientable an orientation $o(p)$ for the vector space $E_p, p\in M$  determines and is determined by an orientation $o$ for the vector bundle $E\to M.$ Clearly  only two orientations are possible; if one, $o,$ is  represented by the section $s$ in $\Lambda(\xi)$ then the other one (the opposite, $-o,$) is represented by the section $-s.$ 
Note that $\Lambda(\xi_1 \oplus \xi_2)= \Lambda (\xi_1)\otimes \Lambda (\xi_2).$ Then, in consistency with this formula the orientations $o_1$ for $\xi_1$ and $o_2$ for $\xi_2$ determine the orientation $o=
 o_1\cdot o_2$ for $\xi_1\oplus \xi_2.$

If $\xymatrix {0\ar[r] &E_1\ar[r]^i &E\ar[r]^\pi&E_2\ar[r]&0}$ is a short exact sequence of vector bundles then a splitting  $s:E_2\to E$  or a projection $\pi': E\to E_1$  provides an isomorphism of vector bundle $E_1\oplus E_2\to E$  or $E\to E_1\oplus E_2$. Such isomorphisms  although non canonical, since they  depend on $s$ or on $\pi',$  ultimately  provides a canonical identification  
of the orientation $o_1\otimes o_2,$  with $o_1$ and $o_2$ orientations for $E_1$ and $E_2,$ to an orientation $o$ for $E,$ 
equivalently written 
$$ o_1\cdot o_2= o, \ \ o_1= o \cdot o_2^{-1},\ \ o_2= o_1^{-1} \cdot o.$$ 
In particular the orientability / orientations of two of the vector bundles $E_1, E_2, E$ determine the orientability / orientation in the third.
Note that the orientation $o\otimes o^{-1}= o\cdot o{-1},$ induced by the canonical isomorphism $\Lambda(\xi)\otimes \Lambda(\xi^\ast) = \epsilon,$ is the canonical orientation for the trivial line bundle $(\epsilon: \mathbb R\times M\to M$) given by the section provided  by  the constant map equal to $1$ on $M.$

With the notations above  the isomorphism $E_1\oplus E_2\to E_2\oplus E_1,$  $(x,y)\to (y,x),$  implies $o_1\cdot o_2= (-1)^{k_1 k_2} o_2 \cdot o_1$ where $k_i= \rank E_i.$
\begin{proposition} \label {P71}\ 

Two orientations $o_x^-$ and   $o_y^-$ for the unstable manifolds $W^-_x$ and $W^-_y,$  $x,y\in \mathcal X$ with  $x>y,$ 
induce an  orientation $o_{x,y}$ for the manifold $\mathcal M(x,y).$
\end{proposition}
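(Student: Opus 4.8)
The plan is to orient $\mathcal{M}(x,y)$ through the inclusion $\mathcal{M}(x,y)\subset W^-_x$ together with the orientation calculus for short exact sequences of vector bundles recorded in Section~\ref{S7}. Writing $\tau$ for the tangent bundle of $\mathcal{M}(x,y)$ and $\nu^-$ for the normal bundle of $\mathcal{M}(x,y)$ in $W^-_x$, as in the proof of Theorem~\ref{T62}, one has over $\mathcal{M}(x,y)$ the short exact sequence
\[
0\to \tau \to TW^-_x|_{\mathcal{M}(x,y)} \to \nu^- \to 0 .
\]
The orientation $o_x^-$ restricts to an orientation of $TW^-_x|_{\mathcal{M}(x,y)}$, so by the rule $o_1\cdot o_2=o$ it suffices to produce an orientation $o_{\nu^-}$ of $\nu^-$; then $o_{x,y}:= o_x^-\cdot (o_{\nu^-})^{-1}$ is the induced orientation of $\tau$, i.e. of $\mathcal{M}(x,y)$. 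Thus the whole statement reduces to orienting $\nu^-$ out of the datum $o_y^-$, and the subtlety is precisely that $o_y^-$ orients $W^-_y$, not $W^+_y$.

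To orient $\nu^-$, I would first exploit transversality. Since $W^-_x\pitchfork W^+_y$, at every $p\in\mathcal{M}(x,y)$ the relations $T_pW^-_x + T_pW^+_y = T_pM$ and $T_pW^-_x\cap T_pW^+_y = T_p\mathcal{M}(x,y)$ give a canonical isomorphism
\[
\nu^-_p = T_pW^-_x/T_p\mathcal{M}(x,y) \;\xrightarrow{\ \cong\ }\; T_pM/T_pW^+_y ,
\]
identifying $\nu^-$ with the restriction to $\mathcal{M}(x,y)$ of the normal bundle of $W^+_y$ in $M$. Because $W^+_y$ is diffeomorphic to $\mathbb{R}^{n-i(y)}$, hence connected and contractible, this normal bundle is trivial, so an orientation of it is determined by an orientation of its fiber over the single point $y\in W^+_y$. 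That fiber is $T_yM/T_yW^+_y$, and the hyperbolicity splitting $T_yM = T_yW^-_y\oplus T_yW^+_y$ at the rest point $y$ supplies a canonical isomorphism $T_yM/T_yW^+_y\cong T_yW^-_y$. Finally $o_y^-$, being an orientation of the contractible manifold $W^-_y$, is exactly an orientation of $T_yW^-_y$. Transporting this orientation back along the chain of identifications orients $\nu^-$ over all of $\mathcal{M}(x,y)$, and then $o_{x,y}:= o_x^-\cdot (o_{\nu^-})^{-1}$ is the asserted orientation, depending only on $o_x^-$ and $o_y^-$ and requiring no global orientation of $M$.

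The main obstacle is verifying that this prescription is genuinely well defined rather than merely pointwise. One must check that the fiberwise isomorphism $\nu^-\cong (TM/TW^+_y)|_{\mathcal{M}(x,y)}$, combined with the contractibility of $W^+_y$, really yields a continuous nonvanishing section of $\Lambda(\nu^-)$ and that the resulting orientation is independent of auxiliary choices (a Riemannian metric, a complement to $TW^+_y$, the splitting at $y$); here the connectedness of $W^+_y$ and the naturality of the exact-sequence orientation rule of Section~\ref{S7} must be invoked carefully, since the point $y$ lies in the closure of, but not in the open stratum of, $\mathcal{M}(x,y)$. I would also fix once and for all the order of factors in $o_x^- = o_{x,y}\cdot o_{\nu^-}$, so that the sign of $o_{x,y}$ is unambiguous and compatible with the product conventions used later for the corners of $\hat{\mathcal{M}}(x,y)$.
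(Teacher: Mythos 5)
Your proof is correct, and it reaches the same orientation by a cleaner bookkeeping than the paper's, so a comparison is worthwhile. The paper chooses an auxiliary orientation $o$ of $T M|_{\mathcal M(x,y)}$ (available because $\mathcal M(x,y)$ lies in the contractible $W^-_x$), uses the splitting $\tau^+_y\oplus\tau^-_y=T$ to give $\tau^+_y$ the orientation $o\cdot o_y^{-1}$, orients $\tau=T\mathcal M(x,y)$ from the transversality sequence $0\to\tau\to\tau^-_x\oplus\tau^+_y\to T\to 0$, and then must verify, via the relation $(-1)^{n\, i(x,y)}\, o_\tau\cdot o_y=o_x$, that the outcome is independent of $o$. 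You dispense with the auxiliary orientation entirely: you orient the normal bundle $\nu^-$ of $\mathcal M(x,y)$ in $W^-_x$ by the canonical transversality isomorphism $\nu^-\cong (TM/TW^+_y)|_{\mathcal M(x,y)}$, orient the bundle $TM/TW^+_y$ over the connected contractible base $W^+_y$ by its fiber at $y$, which the hyperbolic splitting $T_yM=T_yW^-_y\oplus T_yW^+_y$ identifies with $T_yW^-_y$ and hence with the datum $o_y^-$, and finally set $o_{x,y}=o_x^-\cdot (o_{\nu^-})^{-1}$. This buys two things: no independence check is needed, and it makes rigorous a step the paper leaves implicit --- the paper speaks of ``restricting $\tau^-_y$ to $\mathcal M(x,y)$,'' which is literally meaningless since $\mathcal M(x,y)\cap W^-_y=\emptyset$; your normal-bundle reading is precisely the correct interpretation of that abuse. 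What the paper's route buys in exchange is the explicit sign identity $(-1)^{n\, i(x,y)}\, o_\tau\cdot o_y=o_x$ relating the three orientations, which is what feeds into the boundary signs of Observation \ref{O72} and the definition of $\epsilon(\gamma)$; with your conventions you would still have to derive an equivalent formula before those are used in Section \ref{S8}. Two of your stated worries can be discharged immediately: the identification $\nu^-\cong (TM/TW^+_y)|_{\mathcal M(x,y)}$ is a quotient-bundle isomorphism requiring no metric or choice of complement, so continuity and independence of choices are automatic; and the fact that $y$ lies only in the closure of $\mathcal M(x,y)$ is irrelevant, because the transport of the fiber orientation from $y$ takes place over the connected manifold $W^+_y$, which contains both $y$ and all of $\mathcal M(x,y)$.
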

\begin{proof}

For any $x,y\in \mathcal X,$ denote by: 
\begin{enumerate} [label=(\alph*)]
\item $T,$ the tangent bundle of $M,$ 
\item $\tau_x^-$ 
the tangent bundle of $W^-_x,$ 
\item $\tau^+_y,$ the tangent bundle of $W^+_y$ 

\noindent and continue to use the same notations for the restrictions of these bundles to $\mathcal  M(x,y),$  
\item $\tau,$ the tangent bundle of $\mathcal M(x,y),$
\item $\tau_y^-,$ 
the tangent bundle of $W^-_y,$ 
\end{enumerate}

In view of the transversality $W^-_x\pitchfork W^+_y$ one has:
the short exact sequence of vector bundles over $\mathcal M(x,y)$ 
\begin{equation} \label {E4}
0 \to \tau \to \tau^-_x\oplus \tau^+_y \to T\to 0,\end{equation} and the isomorphism of vector spaces 
 $\tau^+ _y (y)\oplus \tau^-_y(y)= T(y).$

Consider the orientations $o_x$ and $o_y$ for $\tau^-_x$ resp. $\tau^-_y$ and choose the orientation $o$ for $T |_{\mathcal M (x,y)}.$ 

Since $W^+_y$ is orientable and connected the orientations $o$ and $o_y$  provide  an orientation   $o^+(o, o_y)$ for $W^+_y$ s.t.  $- o^+(o,o_y)= o^+(-o,o_y)$.
In view of (\ref{E4}) one obtains  
$ o_\tau\cdot o = o_x\cdot o^+_y(o, o_y)$ which in view of the equality above makes $o_\tau$ independent on $o.$ 
\end{proof} 

\vskip .2in

{\bf A few consequences} 
\vskip .1in
Suppose $X$ is a M-S vector field and  $\mathcal O=\{o_x\}$ a collection of orientations. The collection $\mathcal O$   
 induces  by Proposition (\ref{P71} the collection of orientation  $\{o_{x,y}\}$  on $\mathcal M (x,y)$  and  
in view of the free action $\varphi: \mathbb R \times \mathcal M(x,y)\to \mathcal M(x,y),$ whose quotient space is the manifold $\mathcal T(x,y)$ provides
\begin{enumerate}  
\item an orientation  on $\mathcal T(x,y)$ and implicitly  on each component of $\mathcal T(x,y),$ in particular
\item a sign-orientation $\epsilon(\gamma)\in \{\pm 1\}$ for each component $\gamma$ of $\mathcal T(x,y)$ when $i(x)-i(y)=1,$  precisely $\epsilon (\gamma)=+1$ if the induced orientation on $\gamma$ is from $x$ to $y$ and $\epsilon (\gamma)= -1$ otherwise.   
\end{enumerate}
As a straightforward consequence of Theorem (\ref {T62}) , Proposition  (\ref{P71}) and of Stokes's theorem one has:  
\begin {obs} \label{O72}\

If the manifold $\mathcal M(x',y')$ is part of $\hat {\mathcal M}(x,y) (1)$  then 
either one of the two situations hold true
\begin{enumerate}
\item $x'=x$ and $x  >\ \rm{or}\ =  y'>y,$ with $i(y')= i(y)+1,$  in which case the orientation $o_{x,y}$ induces on $\mathcal M(x,y')\times \gamma$ the orientation  $\epsilon (\gamma) o_{x,y'}$  or
\item $y'=y$ and $x>x' \rm{or} =y,$ with $i(x')= i(x) -1,$  in which case the orientation $o_{x,y}$ induces  on $\gamma \times \mathcal M(x', y)$ the orientation  $(-1)^{i(x,y')+ \epsilon (\gamma)}  o_{x',y} $ \ .
\end{enumerate}
\end{obs}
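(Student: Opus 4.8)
The plan is to read the two boundary strata directly off the definition of the codimension-one corner, and then, on each, to compare the boundary orientation induced by $o_{x,y}$ with the product orientation, using the recipe of Proposition \ref{P71} together with the collar supplied by Theorem \ref{T62}. First I would record that a component $\mathcal{M}(x',y')$ of $\hat{\mathcal{M}}(x,y)(1)$ occurs, by the very definition of $\hat{\mathcal{M}}(x,y)(1)$, as a factor of either $\mathcal{M}(x,y')\times\mathcal{T}(y',y)$ or $\mathcal{T}(x,x')\times\mathcal{M}(x',y)$; demanding that the $\mathcal{T}$-factor be $0$-dimensional, so that $\mathcal{M}(x',y')$ fills out the full boundary dimension $i(x,y)-1$, forces $i(y')=i(y)+1$ in the first case and $i(x')=i(x)-1$ in the second, which are precisely the two alternatives in the statement. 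Fixing once and for all the boundary-orientation convention that makes Stokes' theorem (Observation \ref{O42}) hold pins down what ``$o_{x,y}$ induces'' on each stratum.

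The geometric heart is the identification of the outward collar direction, and this is the step I expect to be the main obstacle. By Theorem \ref{T62} a neighbourhood of a broken configuration $(\alpha,\gamma)$ with $\alpha\in\mathcal{M}(x,y')$ and $\gamma$ the short trajectory in $\mathcal{M}(y',y)$ (Case 1) is modelled on $\mathcal{M}(x,y')\times\gamma\times[0,\delta)$, the last coordinate being the breaking parameter. I would argue that, up to sign, its inward direction is the flow direction along the collapsing one-dimensional factor $\mathcal{M}(y',y)$: approaching the boundary means letting the glued instanton linger arbitrarily long near $y'$, i.e.\ pushing a marked point to $t\to+\infty$ along $\mathcal{M}(y',y)$. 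Since $i(y',y)=1$, the orientation $o_{y',y}$ is a $1$-dimensional orientation on $\gamma$, and by definition $\epsilon(\gamma)=+1$ exactly when $o_{y',y}$ agrees with this flow direction (from $y'$ to $y$) and $-1$ otherwise; hence comparing the outward normal against $o_{y',y}$ contributes precisely the factor $\epsilon(\gamma)$. Verifying this collar-orientation claim uniformly, against the smooth manifold-with-corners structure furnished by the references behind Theorem \ref{T62}, is the delicate point; everything downstream is formal.

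With the collar in hand the bookkeeping is algebraic. I would write $o_{x,y}$, $o_{x,y'}$ and $o_{y',y}$ through the Proposition \ref{P71} formula in terms of the unstable-manifold orientations $o_x,o_{y'},o_y$ and a chosen orientation $o$ of $TM|_{\mathcal{M}(x,y)}$; the intermediate orientation $o_{y'}$ telescopes away, reflecting the associativity of the construction, so that the product orientation on $\mathcal{M}(x,y')\times\gamma$ differs from the restriction of $o_{x,y}$ only by (i) the flow-versus-$o_{y',y}$ sign $\epsilon(\gamma)$ of the previous paragraph, and (ii) a Koszul sign incurred in moving the collar line into the position dictated by the boundary convention. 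In Case 1 the collapsing factor already sits at the $y$-end, so the collar line need not be transposed past the bulk factor and the total sign is exactly $\epsilon(\gamma)$, yielding the orientation $\epsilon(\gamma)\,o_{x,y'}$.

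Case 2 is symmetric but with $\gamma\in\mathcal{T}(x,x')$, so here $\epsilon(\gamma)$ compares $o_{x,x'}$ on $\gamma$ with the flow from $x$ to $x'$. Now the collapsing factor $\mathcal{M}(x,x')$ lies at the $x$-end, so bringing the collar line into normal-first position requires transposing it past the full factor $\mathcal{M}(x',y)$; by the graded-commutativity rule $o_1\cdot o_2=(-1)^{k_1k_2}o_2\cdot o_1$ of Section \ref{S7} this produces exactly the extra factor $(-1)^{i(x,y')}$ recorded in the statement, alongside $\epsilon(\gamma)$, and hence the orientation $(-1)^{i(x,y')}\epsilon(\gamma)\,o_{x',y}$ on $\gamma\times\mathcal{M}(x',y)$. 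Thus, once the collar identification is established, the two displayed signs follow purely by collecting the ranks of the factors that the collar line must pass.
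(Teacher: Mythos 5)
Your proposal follows what is effectively the paper's own route: the paper offers no argument for Observation \ref{O72} beyond declaring it a straightforward consequence of Theorem \ref{T62}, Proposition \ref{P71} and Stokes' theorem, and your three steps --- reading the two strata off the definition of $\hat{\mathcal M}(x,y)(1)$ by a dimension count (the $\mathcal T$-factor must be $0$-dimensional for the $\mathcal M$-factor to fill out dimension $i(x,y)-1$), identifying the collar direction with the flow direction along the collapsing trajectory via the corner structure of Theorem \ref{T62}, and then Koszul bookkeeping with the orientation calculus of Section \ref{S7} so that the intermediate orientation telescopes, $o_{x,y}\sim o_{x,y'}\cdot o_{y',y}$ --- are precisely an unpacking of those three ingredients. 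Deferring the collar-orientation identification to the references behind Theorem \ref{T62} is legitimate here, since that is exactly where the paper's own justification resides; you are right that this is where all the non-formal content lives.

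The one place you overstate your result is the final sign in Case 2. Your own count gives the transposition factor $(-1)^{\dim \mathcal M(x',y)}=(-1)^{i(x,y)-1}$ (since $\dim\mathcal M(x',y)=i(x')-i(y)=i(x,y)-1$), yet you announce this as ``exactly the extra factor $(-1)^{i(x,y')}$ recorded in the statement,'' which for $y'=y$ reads $(-1)^{i(x,y)}$: an off-by-one that you silently absorb rather than confront. You should instead have recorded the discrepancy: the sign you actually computed, $(-1)^{i(x,y)-1}\epsilon(\gamma)$, is the one consistent with the paper's own Observation \ref{O73}(2), where the $x'$-terms carry $(-1)^{i(x,y)-1}$, so the exponent printed in Observation \ref{O72} is most plausibly a typo --- as is its literal form $(-1)^{i(x,y')+\epsilon(\gamma)}$, which, with $\epsilon(\gamma)=\pm 1$ sitting in the exponent, would not depend on $\epsilon(\gamma)$ at all; the sensible reading, which you adopt, is $(-1)^{i(x,y')}\epsilon(\gamma)$. (Be aware that the paper is not internally consistent on this point either: equation (\ref{E7}) attaches the extra sign to the $y'$-terms instead.) So your derivation is sound and matches the intended argument, but its last line should report a corrected sign, flagged as disagreeing with the statement as printed, rather than a confirmation of it.
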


\begin{obs}\label {O73}\

Under the hypotheses of Theorem 6.2 item 2, in the presence of the collection of orientations $\mathcal O,$  the following holds true:
\begin {enumerate}
\item For any $x,y\in \mathcal X$ with $i(x,y)= i(x) - i(y)=p$  and   $\omega\in \Omega^p(M)$ then the integral $\int_{\mathcal M(x,y)} \omega$ is convergent  and
equal to $\int_{\hat {\mathcal M}(x,y)} \hat i_{x,y} ^\ast \omega.$ 
\item  If  in addition $\omega |_{\hat W^-_x}= d\alpha,$  $\alpha\in \Omega^{p-1}(\hat W^-_x),$ then 
$$\int_{\mathcal M(x,y)} \omega= \sum_ {\{y'\mid i(y')= i(y)+1\}} \int_{\mathcal M(x,y')} \alpha + (-1)^{i(x,y)-1}\sum_ {\{x'\mid i(x')= i(x)-1\}}  \int_{\mathcal M(x,y')} \alpha.$$
\end{enumerate}
\end{obs}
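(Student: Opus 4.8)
The plan is to derive both items from the manifold-with-corners structure and compactness furnished by Theorem~\ref{T62}, combined with the Stokes formula of Observation~\ref{O42}, reading the boundary contributions off the explicit corner description and fixing the signs by means of Proposition~\ref{P71} and Observation~\ref{O72}.

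For item 1, I would observe that by Theorem~\ref{T62}(2) the space $\hat{\mathcal M}(x,y)$ is a compact oriented manifold with corners of dimension $i(x,y)=p$, on which $\hat i_{x,y}^\ast\omega$ is a smooth form of top degree $p$; hence $\int_{\hat{\mathcal M}(x,y)}\hat i_{x,y}^\ast\omega$ is finite. Since $\mathcal M(x,y)=\hat{\mathcal M}(x,y)(0)$ is the open top stratum and its complement $\sqcup_{k\ge 1}\hat{\mathcal M}(x,y)(k)$ consists of corners of dimension $p-k<p$, that complement is negligible for a $p$-form. Therefore the (a priori improper) integral $\int_{\mathcal M(x,y)}\omega=\int_{\mathcal M(x,y)}i_{x,y}^\ast\omega$ converges and coincides with the integral over the compactification, which is item 1.

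For item 2, I would first make $\hat i_{x,y}^\ast\omega$ exact on $\hat{\mathcal M}(x,y)$. By the commutative diagram of Observation~\ref{O51} the map $\hat i_{x,y}$ factors as $\hat i_{x,y}=\hat i^-_x\circ q$, where $q:\hat{\mathcal M}(x,y)\to\hat W^-_x$ is the restriction of the projection onto the first factor. Hence $\hat i_{x,y}^\ast\omega=q^\ast\big(\omega|_{\hat W^-_x}\big)=q^\ast d\alpha=d(q^\ast\alpha)$, and with $\hat\alpha:=q^\ast\alpha\in\Omega^{p-1}(\hat{\mathcal M}(x,y))$ the Stokes formula \eqref{EE2} of Observation~\ref{O42} yields
\begin{equation}
\int_{\mathcal M(x,y)}\omega=\int_{\hat{\mathcal M}(x,y)}d\hat\alpha=\int_{\hat{\mathcal M}(x,y)(1)}\hat\alpha .
\end{equation}
It then remains to evaluate the right-hand side stratum by stratum. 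The $\mathcal M'(\cdots)$ pieces, in which the marked point sits at a rest point, have dimension $\le p-2$ and drop out. For a face $\mathcal M(x,y')\times\mathcal T(y',y)$ the marked point lies in $W^-_x$, the top stratum of $\hat W^-_x$, so $\hat\alpha$ is pulled back from the first factor and the integral vanishes unless $\dim\mathcal T(y',y)=0$, i.e. $i(y')=i(y)+1$. For a face $\mathcal T(x,y')\times\mathcal M(y',y)$ the marked point lies in the corner $\mathcal T(x,y')\times W^-_{y'}$ of $\hat W^-_x$, where $\hat i^-_x$ factors through the terminal factor $W^-_{y'}$; using this factorization to control the $\mathcal T(x,y')$-degree of $\alpha$ should leave only $\dim\mathcal T(x,y')=0$, i.e. $i(y')=i(x)-1$, after which one renames $y'=x'$. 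This isolates exactly the two families catalogued in Observation~\ref{O72}.

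Finally I would assemble the signs: on the first family the boundary orientation is $\epsilon(\gamma)\,o_{x,y'}$ and on the second it is $(-1)^{i(x,y)-1}\epsilon(\gamma)\,o_{x',y}$, by the two cases of Observation~\ref{O72}; integrating over the zero-dimensional $\gamma$'s collapses each face to $\int_{\mathcal M(x,y')}\alpha$, respectively $\int_{\mathcal M(x',y)}\alpha$, with the global prefactor $(-1)^{i(x,y)-1}$ on the second sum, giving the asserted identity. The main obstacle is precisely this last bookkeeping: one must verify rigorously that the degree count, together with the fact that $\hat i^-_x$ sees only the terminal unstable-manifold factor of each corner, genuinely annihilates every intermediate-break face (this is immediate for the $\mathcal M(x,y')\times\mathcal T(y',y)$ family but delicate for the $\mathcal T(x,y')\times\mathcal M(y',y)$ family, where $\alpha$ may a priori carry $\mathcal T(x,y')$-components), and then match the outward-normal orientation of each surviving face against the conventions of Proposition~\ref{P71} and Observation~\ref{O72} so that the signs $\epsilon(\gamma)$ and the factor $(-1)^{i(x,y)-1}$ emerge correctly.
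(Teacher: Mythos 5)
Your item 1 is correct and is exactly the argument the paper intends: by Theorem \ref{T62} the space $\hat{\mathcal M}(x,y)$ is a compact oriented manifold with corners of dimension $p$, the corners are lower-dimensional, and $\hat i_{x,y}^\ast\omega$ is a smooth top-degree form, so the improper integral over the open top stratum converges to the integral over the compactification. For item 2 your route --- factor $\hat i_{x,y}=\hat i^-_x\circ q$ through the projection $q:\hat{\mathcal M}(x,y)\to\hat W^-_x$ via Observation \ref{O51}, write $\hat i_{x,y}^\ast\omega=d(q^\ast\alpha)$, apply Stokes (Observation \ref{O42}), and read off the codimension-one faces with the signs of Observation \ref{O72} --- is also the intended one, and your treatment of the faces $\mathcal M(x,y')\times\mathcal T(y',y)$ is sound: there $q$ factors through the projection onto $\mathcal M(x,y')$, so a degree count eliminates everything except $i(y')=i(y)+1$.

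However, the boundary bookkeeping has two genuine gaps, and they do not close. First, your claim that all $\mathcal M'(\cdots)$ pieces have dimension $\le p-2$ is false: for the empty chain of intermediate rest points, $\mathcal M'(x>y)=\{x\}\times\mathcal T(x,y)\,\sqcup\,\mathcal T(x,y)\times\{y\}$ has dimension $p-1$, i.e.\ these are codimension-one faces (they are where the marked point reaches $x$, resp.\ $y$; recall $\mathcal M(x,y)\cong\mathcal T(x,y)\times(0,1)$, whose compactification contains $\mathcal T(x,y)\times[0,1]$). The face $\{x\}\times\mathcal T(x,y)$ is harmless, since $q$ collapses it to the single point $x$ and $q^\ast\alpha$ dies there; but $q$ \emph{embeds} $\mathcal T(x,y)\times\{y\}$ into the corner $\mathcal T(x,y)\times W^-_y\subset\hat W^-_x(1)$, where the restriction of $\alpha$ is a top-degree form on a $(p-1)$-manifold, and nothing forces its integral to vanish. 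Second, the difficulty you flag for the faces $\mathcal T(x,x')\times\mathcal M(x',y)$ with $i(x')<i(x)-1$ is not removable by any degree count: on such a face $q$ is again an embedding into $\mathcal T(x,x')\times W^-_{x'}$; it is $\hat i^-_x$, not $q^\ast\alpha$, that factors through the projection $p_{W^-_{x'}}$, and an arbitrary primitive $\alpha$ on $\hat W^-_x$ carries $\mathcal T(x,x')$-components on that corner. That these obstructions are real and not merely unverified can be seen on $S^2$ with two maxima, one saddle $x$ and one minimum $y$: there $p=1$, both sums in the asserted identity are empty, yet a bump one-form $\omega$ supported near an interior point of one of the two instantons (whose pullback to the interval $\hat W^-_x$ is automatically exact) gives $\int_{\mathcal M(x,y)}\omega\neq 0$, the discrepancy being exactly the $\mathcal T(x,y)\times\{y\}$ term. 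Conceptually: replacing $\alpha$ by $\alpha+d\beta$ changes each individual face integral while the left-hand side is unchanged, so no identity retaining only a sub-collection of faces can hold for every primitive $\alpha$. To establish the formula one must either include the missing boundary terms or restrict to primitives adapted to the corner structure (e.g.\ product-like near $\hat W^-_x(1)$, in the spirit of the construction in the paper's Appendix); the dimension count alone cannot supply this.
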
 
\vskip .2in 

\section {Dynamics, cohomology and integration of forms }\label {S8}

Let  $X$ be a M-S vector field on the smooth manifold $M$  and suppose that the following two properties are satisfied.

{\bf C1}: for any $x,y\in \mathcal X$  the spaces $\hat W^\pm_x$ and $\hat {\mathcal M}(x,y)$ are compact,  

{\bf C2}: for any $x\in \mathcal X_r$  the set $\{y\in \mathcal X\mid x>y\}$ is finite \footnote {actually the compacity of all $W^\pm_x$ for all $x\in \mathcal X$ is equivalent to the compacity of all $\mathcal M(x,y)$ for all $x>y, x,y\in \mathcal X$ hence C2 implies  C1 }, 

Note that both  C1 and C2  are satisfied in case that $f$ is a proper Lyapunov function bounded from below 
or is the  lift  
of a Lyapunov (for $X$) closed one form 
on a  closed manifold as described in section 2.  
\vskip .1in 
Consider :

\begin{enumerate} [label=(\alph*)]
\item  for $x\in \mathcal X_r, y\in \mathcal X_{r-1}$  the map  $$\boxed{I_r : \mathcal X_r\times \mathcal X_{r-1}\to \mathbb Z} $$ well defined  in view of C2  by 
$$\mathbb I_r (x,y):=    
 \sum _{\gamma\in \mathcal T(x,y)} \epsilon (\gamma),$$   
\item for any $x\in \mathcal X$  and differential form $\omega \in \Omega^{i(x)}(M)$ 
 the map 
$$\boxed{Int_r : \Omega^r (M) \times \mathcal X_r\to \mathbb R}$$ well defined in view of C1 by $Int(\omega, x):= \int_{\hat {\mathcal W}^-_x} i^\ast_x \omega=\int_{\mathcal W^-_x}  \omega ,$  

\item For any $x\in \mathcal X_{r+p}, y\in \mathcal X_p$  and differential form $\omega \in \Omega^r(M)$   the map 
$$\boxed{Ent_{r,p} : \Omega^r (M)\times \mathcal X_p \times \mathcal X_{p+r}\to \mathbb R}$$ well defined in view of C1 by $Ent_{r,p}(\omega, y,x):= \int_{\hat {\mathcal M}(x,y)} i^\ast _{x,y}\omega= \int_{\mathcal M(x,y)} \omega,$
\item $C^r(X):= Maps (\mathcal X_r, \mathbb R).$ 
\end{enumerate}
\vskip .1 in

The maps $\mathbb I_r : \mathcal X_r \times \mathcal X_{r-1}\to \mathbb Z$, $Int_r : \Omega^r(M)\times  \mathcal X_r\to \mathbb R$ and 
$Ent_{r, p} : \Omega^r(M)\times  \mathcal X_p\times \mathcal X_{p+r}\to\ \mathbb R$  induce  the linear maps 

\begin{enumerate} 
\item $\delta^r: C^r(X)\to C^{r+1}(X)$ defined by 
$$\delta^r(f) (x):= \sum_{y\in \mathcal X_r} I_{r+1} (x,y) f(y)$$   for $x\in \mathcal X_{r+1},$
\item $Int_r: \Omega^r(M)\to C^r(X)$ defined by $$Int_r (\omega) (x)=\int_{W^+_x} \omega  $$  for $x\in \mathcal X_r,$
\item $E_{r,p} :\Omega^r(M) \otimes C^p(X)\to C^{p+r}(X)$ defined  by $$E_{r,p} (\omega \otimes f) (x):= \sum_{\{y\in \mathcal X_p \mid x>y\}}  f(y)Ent_{r.p} (\omega, y,x)
= \sum_{\{y\in \mathcal X_p\mid x>y\}}  f(y)\int_{\mathcal M(x,y)}  \omega
$$  for any $x\in \mathcal X_{p+r}.$
\end{enumerate}

\begin{proposition} \label {P81}\ 

Suppose $X$ is M--S vector field which satisfies C1 and C2 and  
 $\mathcal O= \{o_x\}$ is a collection of orientations. 
 The following holds true:
  \begin{enumerate}
\item $\delta^{r+1}\cdot \delta_r=0,$ equivalently,  for any $x\in \mathcal X_{r+1}, z\in \mathcal X_{r-1}$ one has $\sum _{y\in \mathcal X_r} \mathbb I_{r+1}(x,y)\cdot \mathbb I_r(y,z)=0,$
\item  $\delta^r \cdot Int _r (\omega)= Int_{r+1} (d \omega),$  
\item  $\delta ^{r+p}(E_{r,p}(\omega \otimes f)) (x)= Ent_{r+1,p}(d\omega, f) (x)  + (-1)^r E_{r, p+1}(\omega, \delta^p f) (x),$  

for $ x\in \mathcal X_{r+1},\  \omega\in \Omega^r(M),\ f\in C^p(X).$
\end{enumerate}
\end{proposition}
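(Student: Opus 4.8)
The plan is to prove all three identities as consequences of Stokes' theorem applied to the compact manifolds-with-corners $\hat W^-_x$ and $\hat{\mathcal{M}}(x,y)$, using the explicit description of their $1$-corners from Theorem~\ref{T62} together with the sign bookkeeping of Observation~\ref{O72}. The three statements are not independent: item~1 is the special case of item~2 obtained by taking $\omega$ to be a suitable locally constant (hence closed) form, so I would derive item~2 first and read off item~1 afterward, and item~3 is the ``relative'' version of the same argument carried out on $\hat{\mathcal{M}}(x,y)$ instead of $\hat W^-_x$.

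First I would prove item~2. Fix $x\in\mathcal{X}_{r+1}$ and $\omega\in\Omega^r(M)$. By definition $(\delta_r Int_r\omega)(x)=\sum_{y\in\mathcal{X}_r}\mathbb{I}_{r+1}(x,y)\int_{W^-_y}\omega$, while $Int_{r+1}(d\omega)(x)=\int_{W^-_x}d\omega=\int_{\hat W^-_x}\hat i^{-\ast}_x\,d\omega$. The key step is to apply Observation~\ref{O42} (Stokes for manifolds with corners) to the form $\hat i^{-\ast}_x\omega$ on the compact oriented manifold-with-corners $\hat W^-_x$: this yields $\int_{\hat W^-_x}\hat i^{-\ast}_x\,d\omega=\int_{\hat W^-_x(1)}\hat i^{-\ast}_x\omega$. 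Now I would invoke the description of $\hat W^-_x(1)=\bigsqcup_{\{x>y_1\}}\mathcal{T}(x,y_1)\times W^-_{y_1}$ from the definition of $\hat W^-_x$, together with the compatibility of orientations (Observation~\ref{O72}) which identifies the induced orientation on each component $\gamma\times W^-_y$ with $\epsilon(\gamma)$ times the product orientation. Restricting to the components with $i(y)=r$ (the dimension count $\dim W^-_y=r$ forces all other corner pieces to integrate to zero against an $r$-form after factoring off $\gamma$), the integral splits as a product $\sum_{\gamma\in\mathcal{T}(x,y)}\epsilon(\gamma)\cdot\int_{W^-_y}\omega=\mathbb{I}_{r+1}(x,y)\int_{W^-_y}\omega$, and summing over $y$ gives exactly item~2.

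For item~1, I would specialize item~2. Since $\delta_{r+1}\delta_r=0$ is purely combinatorial, the cleanest route is to observe that for a closed form the statement $\delta_{r+1}(\delta_r Int_r\omega)=\delta_{r+1}Int_{r+1}(d\omega)=Int_{r+2}(dd\omega)=0$ holds, but to isolate the coefficient identity $\sum_y\mathbb{I}_{r+1}(x,y)\mathbb{I}_r(y,z)=0$ I would instead apply Stokes directly on the two-dimensional pieces: for $x\in\mathcal{X}_{r+1}$, $z\in\mathcal{X}_{r-1}$, the space $\hat{\mathcal{T}}(x,z)$ is a compact $1$-manifold with boundary whose $1$-corner is $\bigsqcup_y\mathcal{T}(x,y)\times\mathcal{T}(y,z)$, and the signed count of boundary points of a compact oriented $1$-manifold is zero; matching the boundary signs via Observation~\ref{O72} gives $\sum_y\mathbb{I}_{r+1}(x,y)\mathbb{I}_r(y,z)=0$.

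Finally item~3 is the relative analogue on $\hat{\mathcal{M}}(x,y)$. Fixing $x\in\mathcal{X}_{p+r+1}$ [note the indexing: I read $x\in\mathcal{X}_{r+1}$ in the statement as shorthand and take $x\in\mathcal{X}_{p+r}$ so that $\dim\mathcal{M}(x,y)=r$ when $y\in\mathcal{X}_p$], I would apply Observation~\ref{O42} to $\hat i^\ast_{x,y}\omega$ on $\hat{\mathcal{M}}(x,y)$ to get $\int_{\mathcal{M}(x,y)}d\omega=\int_{\hat{\mathcal{M}}(x,y)(1)}\hat i^\ast_{x,y}\omega$. The $1$-corner $\hat{\mathcal{M}}(x,y)(1)$ splits into the two families catalogued in Observation~\ref{O72}: the ``top-breaking'' pieces $\mathcal{M}(x,y')\times\gamma$ with $i(y')=i(y)+1$ and the ``bottom-breaking'' pieces $\gamma\times\mathcal{M}(x',y)$ with $i(x')=i(x)-1$, carrying the signs $\epsilon(\gamma)$ and $(-1)^{i(x,y')+\epsilon(\gamma)}$ respectively. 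Summing the first family reproduces $Ent_{r+1,p}(d\omega,f)$-type terms contracted against $f$, and the second family produces the term $(-1)^r E_{r,p+1}(\omega,\delta f)$ after collecting the sign $(-1)^{i(x,y)-1}$ from Observation~\ref{O73}(2) and the orientation sign from Observation~\ref{O72}(2); this is essentially the content of Observation~\ref{O73}(2) repackaged as a cochain identity.

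The main obstacle is the sign reconciliation in item~3: the two sign conventions—the degree shift $(-1)^r$ that must appear on the $E_{r,p+1}(\omega,\delta\!f)$ term and the geometric corner signs $(-1)^{i(x,y')+\epsilon(\gamma)}$ from Observation~\ref{O72}(2)—have to be shown to agree after the orientation of the quotient $\mathcal{T}=\mathcal{M}/\mathbb{R}$ is threaded through Proposition~\ref{P71}. I would handle this by fixing once and for all the ``$\mathbb{R}$-direction-first'' convention used to orient $\mathcal{T}(x,y)$ from $o_{x,y}$, then tracking the factor $(-1)^{i(x,y')}$ through the reordering isomorphisms of Section~\ref{S7} exactly as in the proof of Proposition~\ref{P71}; the remaining $\epsilon(\gamma)$ factors are absorbed into the definition of $\mathbb{I}$ and of the induced boundary orientation. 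Items~1 and~2 are comparatively routine once the orientation conventions are pinned down.
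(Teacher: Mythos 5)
Your proposal is correct and follows essentially the same route as the paper's own proof: item 1 via the vanishing signed count of boundary points of the compact one-dimensional manifold with boundary $\hat{\mathcal T}(x,z)$, and items 2 and 3 via Stokes' theorem (Observation \ref{O42}) applied on $\hat W^-_x$ and $\hat{\mathcal M}(x,y)$, using the description of the codimension-one corners from Theorem \ref{T62} and the orientation signs of Observations \ref{O72} and \ref{O73}, which is exactly how the paper obtains its key identities (\ref{E6}) and (\ref{E7}). Your correction of the indexing in item 3 (evaluating at $x\in\mathcal X_{r+p+1}$ rather than $\mathcal X_{r+1}$) is consistent with the paper's intermediate calculations (\ref{E8})--(\ref{E9}).
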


\begin{proof}
 
Item 1.: If $i(x)- i(z)=2$ then $\hat {\mathcal T}(x,z)$ is a compact one dimensional manifold  with boundary, hence it is (diffeomorphic to) a finite union of compact oriented intervals, Then the  boundary 
of $\hat {\mathcal T}(x,y)$ consists of the same number of  points with the sign-orientations $"+ 1``$ and of points with the sign-orientation $"-1``,$  whose total sign-cardinality is $0.$ This cardinality, in view of the definition of $\delta_r,$  is actually $\sum_ {\{y\in \mathcal X \mid x >y >z\}} I_{i(x)} (x,y ) I_{i(x)-1} (y,z),$ hence $=0.$  

In order to check items 2. and 3. recall from Theorem (\ref {T62}) that the only components of $\hat W^-_x(1)$ whose image by $\hat i_x$ have dimension $i(x)-1$  are 
$\mathcal T(x,y)\times \mathcal W^-_y$ for $x >y$ and $i(y)= i(x)-1$ and 

the only components of $\hat {\mathcal M}(x,y)(1)$ whose image by $\hat i_{x,y}$ have dimension $i(x,y)-1$  are 
$\mathcal T(x,x')\times \mathcal M(x',y)$ for $x>x'>y$ with $i(x')= i(x)-1$ and $\mathcal M(x,y')\times \mathcal T(y',y)$ for $x>y'>y$ with $i(y')= i(y)+1.$  In view of Stokes's theorem and  
of Observations \ref{O72} and \ref{O73} one has the following:   

\begin{equation}\label {E6}
\int _{W^-_x} d \omega = \sum _{\{y \mid i(y)= i(x)-1\}} I(x,y) \int _{W^-_y}  \omega \\
\end{equation} and 
\begin{equation}\label {E7}
\int _{\mathcal M(x,y)} d \omega = \sum _{\{x' \mid i(x')= i(x)-1)\}}I_{i(x)}(x, x') \int _{\mathcal M(x',y} \omega
    + \sum _{\{y' \mid i(y')= i(y)+1\}}    (-1)^{i(x,y)-1}I_{i(y)}(y,y') \int _{\mathcal M (x,y')}  \omega          
\end{equation}

Item 2. follows immediately from (\ref{E6})  and item 3. form  (\ref{E7}).
The verification of item 3. is eased by  the following  intermediate calculations:

\begin{enumerate}[label=(\alph*)]
\item 
\begin{equation} \label {E8}
\delta (E_{r,p} (\omega, f)) (x)= \sum _{x'\mathcal X_{r+p}} \sum _{y\in \mathcal X_p} f(y) \mathbb I(x,x') \int _{\mathcal M(x',y)}\omega,
\end{equation}
\item \begin{equation}
E_{r+1,p} (d\omega, f)(x)= \begin{cases}
&\sum_{y\in \mathcal X_p}\sum_{x' \in \mathcal X_{r+p}}  f(y) I(x,x') \int _{\mathcal M(x',y)} \omega   + \\
(-1)^{r+1}&\sum_{y\in \mathcal X_p} \sum _{y'\in \mathcal X_{p+1}}  f(y)I(y',y)  \int_{ \mathcal M(x,y')} \omega
\end{cases},
\end{equation}

\item \begin{equation} \label{E9} 
\begin{aligned}(-1)^r E_{r, p+1}(\omega, \delta  f) (x)=&(-1)^r \sum _{y' \in \mathcal X_{p+1}} \delta f(y')\int _{\mathcal M(x,y')} \omega =\\
&= (-1)^r \sum _{y'\in X_{p+1}} \sum _{y\in \mathcal X_p} I(y',y) f(y) \int _{\mathcal M(x,y')} \omega. 
\end{aligned}
\end{equation}

\end{enumerate}

Clearly 
(7), (8) and (9) imply (\ref{E7}).

\end{proof}

\vskip .2in

For three cochain complexes $\mathcal C^\ast_i=(C^\ast_i, d^\ast_i), i=1,2,3, $  a morphism of cochain complexes $\Lambda: C^\ast_1\otimes C^\ast _2 \rightarrow C^\ast_3$ 
is provided by the linear maps $\Lambda_{p,q}: C^p_1\otimes C^q_2 \to
C^{p+q}_3$ s.t.  $$d_3 (\Lambda_{p,q} (f\otimes g))= \Lambda_{p+1,q} (d_1f\otimes g) + (-1)^p \Lambda_{p,q+1} (f\otimes d_2 f)$$ which induces, by passing to cohomology, the linear maps $\Lambda_{r,p} : H^r(\mathcal C_1) \otimes H^p(\mathcal C_2) \rightarrow  H^{r+p}(\mathcal C_3).$

We apply this to the cochain complexes $\mathcal  C^\ast_1=(\Omega^\ast (M), d^\ast),$ the de-Rham complex of $M,$  $\mathcal C^\ast_2$ and  $\mathcal C^\ast_3$ the cochain complex $(C^\ast(X), \delta ^\ast)$ and derive the linear maps
$E_{p,q}: H^p_{DR} (M)\otimes H^q_{X} (M)\to H^{p+q}_{X} (M),$ where  $H^\ast_{DR} (M)$ denotes  the  de-Rham cohomology, $H^\ast_{X} (M)$  the cohomology of the cochain complex$(C^\ast(X), \delta^\ast).$  Continue to denote by $Int_q: H^q_{DR}(M)\to H^q_X (M)$ the linear map induced by $Int_q.$ 

\begin{theorem} (elementary Morse-theory) \label {T82} \

If $X$ is a M-S vector field on a closed manifold which has Lyapunov function and $\mathcal O$ a collection of orientations 
\footnote  {considerably weaker hypotheses like proper Lyapunov function bounded from below suffice} 
then the linear maps  $Int_q:\Omega ^q(M)\to C^q(X)$ induce
the linear isomorphisms $Int_q: H_{DR}^q(M) \to H^q_X(M).$ 
\end{theorem}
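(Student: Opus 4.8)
The plan is to show that the chain map $Int$ is a quasi-isomorphism by an induction over the sublevel sets of the Lyapunov function $f$, comparing the de Rham complex $(\Omega^\ast(M),d)$ with the Morse complex $(C^\ast(X),\delta)$ levelwise and closing each step with the five lemma. First I would record that $Int$ really is a morphism of cochain complexes: this is exactly Proposition \ref{P81}(2), $\delta_r\circ Int_r = Int_{r+1}\circ d$, whose proof rests on the compactness of the $\hat W^-_x$ (Theorem \ref{T62}(2)) and on the Stokes formula for manifolds with corners (Observation \ref{O42}); hence $Int_q$ descends to a well-defined map $H^q_{DR}(M)\to H^q_X(M)$. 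Since $M$ is closed and $f$ is Lyapunov for the M--S field $X$, the condition $df(X)<0$ off $\mathcal X$ together with the local normal forms of Section \ref{S3} forces the rest points to be nondegenerate critical points of $f$ of matching index, so I may treat $f$ as a Morse function with critical set $\mathcal X$.

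Next I would set up the filtration. Choose regular values $a_0<c_1<a_1<c_2<\cdots<c_N<a_N$ interleaving the critical values of $f$, with $a_0<\min f$, and set $M_j=f^{-1}((-\infty,a_j])$, so $\emptyset=M_0\subset M_1\subset\cdots\subset M_N=M$. On the Morse side, let $B_j\subset C^\ast(X)$ consist of the cochains supported on rest points outside $M_j$. Because every instanton strictly decreases $f$ (so $\mathbb I_r(x,y)\neq 0$ forces $f(x)>f(y)$), the coboundary $\delta$ raises the $f$-value, each $B_j$ is a subcomplex, and the $B_j$ decrease from $B_0=C^\ast(X)$ to $B_N=0$. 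Writing $Q_j=C^\ast(X)/B_j$ for the cochains supported in $M_j$, the short exact sequences $0\to B_{j-1}/B_j\to Q_j\to Q_{j-1}\to 0$ give long exact sequences in cohomology; here $B_{j-1}/B_j$ is the span of the rest points at level $c_j$, and since there are no instantons between rest points of equal $f$-value its induced differential vanishes, so $H^\ast(B_{j-1}/B_j)\cong\bigoplus_{\{p:\,f(p)=c_j\}}\mathbb R$, one copy in degree $i(p)$. On the de Rham side, classical Morse theory gives $M_j\simeq M_{j-1}\cup\bigcup_p D^{i(p)}$, so $H^\ast_{DR}(M_j,M_{j-1})\cong\bigoplus_p\mathbb R$ in the same degrees, with its own long exact sequence of the pair. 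Finally $Int$ is compatible: if a relative form $\omega$ vanishes on $M_{j-1}$, then for any $q$ with $W^-_q\subset M_{j-1}$ one has $Int(\omega)(q)=\int_{\hat W^-_q}\omega=0$, so $Int(\omega)\in B_{j-1}$; thus $Int$ carries the relative de Rham complex into $B_{j-1}/B_j$ and commutes with the connecting homomorphisms, again via Observation \ref{O42} applied to the $\hat W^-_p$ whose corners sit in $M_{j-1}$.

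The inductive step then reads as follows. Assuming $Int_\ast\colon H^\ast_{DR}(M_{j-1})\to H^\ast(Q_{j-1})$ is an isomorphism (the base case $M_0=\emptyset$ being trivial), the five lemma applied to the two long exact sequences reduces the isomorphism for $M_j$ to the claim that $Int$ induces an isomorphism $H^\ast_{DR}(M_j,M_{j-1})\to H^\ast(B_{j-1}/B_j)$ on the subquotients. This is the crux and the step I expect to be the main obstacle, since it is a purely local statement about the normal form of $X$ at a rest point. Concretely, the off-diagonal entries vanish for free: for distinct $p,q$ at the same level $c_j$ the descending disks are disjoint, so $\int_{\hat W^-_q}\theta_p=0$ where $\theta_p$ is a relative generator supported in a small neighborhood of the descending cell of $p$. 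The content is the diagonal computation $\int_{\hat W^-_p}\theta_p=\pm 1$, i.e. that integrating the Thom class of the $i(p)$-cell over that cell is a unit; the sign is determined by the orientation conventions of Proposition \ref{P71} and Observation \ref{O72}. Granting this nondegeneracy, the associated-graded map is a diagonal isomorphism, the five lemma propagates the isomorphism from $M_{j-1}$ to $M_j$, and at $j=N$ one obtains the desired isomorphism $Int_q\colon H^q_{DR}(M)\to H^q_X(M)$.

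I would remark that the same conclusion can be reached by the current-theoretic argument of Harvey--Lawson, in which the flow $\varphi_t^\ast$ provides an explicit chain homotopy between the identity and the projection onto sums of unstable-manifold currents; in that approach the finiteness and compactness supplied by Theorem \ref{T62} play the role of the convergence estimates, but the local Thom-class computation reappears as the identification of the limiting current, so it remains the essential point either way.
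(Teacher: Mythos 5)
The paper does not actually prove Theorem \ref{T82} in the text: its entire ``proof'' is the citation ``For details cf.~\cite{BFK}.'' Your argument is therefore necessarily a different route --- a self-contained one --- and it is the classical one: filter both complexes by sublevel sets of the Lyapunov function, check that $Int$ respects the filtrations, compute the associated graded pieces, and close each step with the five lemma. The skeleton is sound, and you correctly identify the inputs you need from the paper: Proposition \ref{P81}(2) makes $Int$ a cochain map (resting on the compactness in Theorem \ref{T62}(2) and Stokes for manifolds with corners, Observation \ref{O42}), the orientation conventions of Proposition \ref{P71} and Observation \ref{O72} fix the signs, and the strict decrease of $f$ along instantons makes the $B_j$ subcomplexes with vanishing induced differential on $B_{j-1}/B_j$. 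You are also right that the whole weight falls on the local statement at a critical level: the handle description of $H^\ast_{DR}(M_j,M_{j-1})$ and the diagonal computation $\int_{\hat W^-_p}\theta_p=\pm 1$. What your route buys is a proof that stays inside the paper's own machinery; what the author's citation buys is precisely the avoidance of that local analysis, which is carried out in \cite{BFK}.

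One assertion in your setup is false as stated, though repairably so: a Lyapunov function for an M--S field need \emph{not} be a Morse function, so you cannot ``treat $f$ as a Morse function with critical set $\mathcal X$.'' Already for $X=-t\,\partial/\partial t$ on $\mathbb R$ the function $f(t)=t^4$ satisfies $df(X)=-4t^4<0$ off the rest point, yet $0$ is a degenerate critical point of $f$; hyperbolicity of $X$ constrains the linearization of the vector field, not the Hessian of $f$. What your induction actually needs is that crossing the level $c_j$ attaches one cell of dimension $i(p)$ per rest point $p$ at that level, and this follows from the dynamics rather than from $f$: local stable/unstable manifold theory at the hyperbolic rest points, plus the flow itself, deformation-retracts $M_j$ onto $M_{j-1}$ together with the local descending disks $W^-_p\cap M_j$; the Hessian of $f$ never enters. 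With that substitution --- and with the Thom representatives $\theta_p$ supported in isolating neighborhoods small enough that $\hat W^-_q$ misses them for the other rest points $q$ at the same level (here one uses that $W^-_q\cap\{f>c_j-\delta\}$ is the local unstable disk, and that all lower strata of $\hat W^-_q$ lie below $a_{j-1}$) --- your five-lemma induction, and likewise your concluding Harvey--Lawson remark, goes through.
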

For details  cf. \cite {BFK}.

\begin{theorem}\label{T83}
 Under the same hypotheses as in Theorem \ref{T82}, the possibly non-commutative diagram
$$\xymatrix{ \Omega^r(M) \otimes C^q(X) \ar[r] ^-{Ent_{r,q}}&C^{r+q}(X)\\
 \Omega^r(M) \otimes \Omega^q(M)\ar[r]^-{\wedge_{r,q}}\ar[u]^{id\otimes Int_q} &\Omega^{r+q}(M)\ar[u]^{Int_{r+q}} }$$
 induces, by passing to cohomology,
the commutative diagram
$$\xymatrix{ H_{DR}^r(M) \otimes 
H_{X}^q(M) \ar[r] ^-{E_{r,q}}&H_{M}^{r+q}(X)\\
H_{DR}^r(M) \otimes H_{DR}^q(M)\ar[r]^ -{\wedge_{r,q}}\ar[u]^{id\otimes Int_q} &H_{DR}^{r+q}(M)\ar[u]^{Int_{r+q}}}
 $$
with $\wedge$ the product induced from the wedge product of forms,  equivalently the analytic version of "cup-product``. 
\end{theorem}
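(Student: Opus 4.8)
The plan is to promote the square from a diagram of cochain-complex morphisms to a statement about cohomology by exhibiting a chain homotopy between the two ways around it. Since $Int_q$ is a quasi-isomorphism by Theorem \ref{T82}, it suffices to show that the two composites
$$\Phi_1 := Ent_{r,q}\circ(\mathrm{id}\otimes Int_q), \qquad \Phi_2 := Int_{r+q}\circ\wedge_{r,q},$$
regarded as morphisms from the tensor complex $\big(\Omega^\ast(M)\otimes\Omega^\ast(M),\ d\otimes 1 + (-1)^{\bullet}\,1\otimes d\big)$ to $\big(C^\ast(X),\delta\big)$, induce the same map on cohomology; that each is a morphism of complexes follows from Proposition \ref{P81} (items 2 and 3) together with the Leibniz rule for $\wedge$. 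Concretely, for closed $\omega\in\Omega^r(M)$ and closed $\eta\in\Omega^q(M)$ and $x\in\mathcal X_{r+q}$ one must match
$$\Phi_2(\omega\otimes\eta)(x)=\int_{W^-_x}\omega\wedge\eta, \qquad \Phi_1(\omega\otimes\eta)(x)=\sum_{\{y\in\mathcal X_q\mid x>y\}}\Big(\int_{W^-_y}\eta\Big)\int_{\mathcal M(x,y)}\omega,$$
up to the coboundary $\delta$ of an explicit $(r+q-1)$-cochain.

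To build the homotopy I would use the compact manifolds with corners $\hat W^-_x$ provided by Theorem \ref{T62}(2). Each $\hat W^-_x$ is a compactified cell, hence contractible, so for $q\geq 1$ the closed form $\hat i^-_x{}^\ast\eta$ admits a primitive $\alpha_x\in\Omega^{q-1}(\hat W^-_x)$ with $d\alpha_x=\hat i^-_x{}^\ast\eta$; I set $s(\omega\otimes\eta)(x') := (-1)^r\int_{\hat W^-_{x'}}\hat i^-_{x'}{}^\ast\omega\wedge\alpha_{x'}$ for $x'\in\mathcal X_{r+q-1}$. Because $\omega$ is closed, $\hat i^-_x{}^\ast(\omega\wedge\eta)=(-1)^r d(\hat i^-_x{}^\ast\omega\wedge\alpha_x)$, and Stokes for manifolds with corners (Observation \ref{O42}) converts $\int_{W^-_x}\omega\wedge\eta$ into an integral over the $1$-corner $\hat W^-_x(1)=\bigsqcup_{\{i(y)=r+q-1\}}\mathcal T(x,y)\times W^-_y$, the orientation signs being supplied by Observations \ref{O72} and \ref{O73}.

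The key point, and the main obstacle, is that the primitives $\alpha_x$ cannot be chosen compatibly across strata: on a component $\{\gamma\}\times W^-_y$ of $\hat W^-_x(1)$ the restriction $\alpha_x|$ is again a primitive of $i^-_y{}^\ast\eta$, hence differs from $\alpha_y$ by a closed form, and it is exactly this mismatch that the homotopy must absorb. The first Stokes step reproduces $\delta s(\omega\otimes\eta)(x)$ plus error terms built from these closed-form differences; these are not yet the module term, so one must descend through the corner strata $\hat W^-_x(k)$, $k=1,\dots,r$, iterating Stokes until the breaking index drops to $i(y)=q$, where $\int_{W^-_y}\eta$ becomes a top-degree integral and $\int_{\mathcal M(x,y)}\omega$ emerges as the transverse factor of dimension $r=\deg\omega$. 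The heart of the proof is therefore a careful induction on the number of breaks that reorganizes all intermediate primitive-mismatch contributions, with signs dictated by Observation \ref{O72}, into the single term $E_{r,q}$ together with a coboundary, the remaining terms telescoping away. Once this identity holds at the cochain level for closed $\omega,\eta$, passing to cohomology yields the asserted commutative square.
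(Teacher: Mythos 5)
Your reduction and your first Stokes step are fine, but the argument has a genuine gap at exactly the step you label ``the heart of the proof'': the stratum-by-stratum induction is never carried out, and, as described, it cannot produce the required identity. Two concrete obstructions. First, on a component $\mathcal T(x,y)\times W^-_y$ of $\hat W^-_x(1)$ with $i(y)<r+q-1$, the mismatch $c:=\alpha_x|-p^\ast_{W^-_y}\alpha_y$ is a closed $(q-1)$-form on the closure of that stratum, a space homotopy equivalent to $\hat{\mathcal T}(x,y)$, which is in general \emph{not} contractible; hence $c$ need not be exact, your iterated Stokes cannot be applied to it, and nothing in Observations \ref{O72} or \ref{O73} controls its cohomology class. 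Second, and more fundamentally: by Theorem \ref{T62}, $\hat i^-_x$ restricted to any corner stratum factors through the projection onto its last factor $W^-_{y_k}$, so every term your iteration can ever generate contains the pullback $(i^-_{y_k})^\ast\omega$, which vanishes identically whenever $i(y_k)<r=\deg\omega$. Thus when $q<r$ (a case the theorem must cover, e.g.\ classes in $H^2$ and $H^1$ on a closed $3$-manifold) the descent can never reach breaking index $q$, yet the target $E_{r,q}(\omega\otimes Int_q\eta)(x)=\sum_{y\in\mathcal X_q}\bigl(\int_{W^-_y}\eta\bigr)\int_{\mathcal M(x,y)}\omega$ is in general nonzero. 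The failure is structural: $\mathcal M(x,y)\cong\mathcal T(x,y)\times(0,1)$ is \emph{not} a corner stratum of $\hat W^-_x$ but sits in the interior, along the cone directions, so an integral of $\omega$ over $\mathcal M(x,y)$ can never ``emerge'' from boundary iteration alone. (In addition, your homotopy is undefined for $q=0$, where no primitive exists, and degenerate for $q=1$, where the mismatch is a constant rather than an exact form.)

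For contrast, the paper circumvents precisely this difficulty by a different mechanism: instead of a chain homotopy valid for arbitrary representatives, it uses the freedom to choose the representatives within their cohomology classes. One takes the form to be integrated over the $W^-_y$'s to vanish near $\bigcup_{\{y\mid i(y)<r\}}W^-_y$ and the other form to vanish near $x$ and near the union of the $\mathcal M(x',y')$ of dimension $<p$ (possible because these sets are compact ANRs of dimension $\le r-1$, resp.\ $\le p-1$), and then proves the identity (\ref{DD}) \emph{exactly} at the cochain level, with no correction terms: using the conic structure $\theta_x\colon C(\partial\hat W^-_x)\to\hat W^-_x$ from the Appendix, the pullback of the wedge product is replaced by a ``separated-variables'' closed form supported in the pieces $W^-_y\times\mathcal M(x,y)$ and agreeing with it along $\partial\hat W^-_x$; contractibility of $\hat W^-_x$ plus Stokes identifies the two integrals, and the separated integral splits as $\int_{W^-_y}\omega_1\cdot\int_{\mathcal M(x,y)}\omega_2$ (Proposition \ref{P}). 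Note that it is exactly this modification of the form \emph{in the interior}, not on the boundary strata, that makes $\mathcal M(x,y)$ appear as an integration cycle --- the ingredient your boundary-only iteration is missing. To complete your argument you would either have to prove your telescoping claim by incorporating such an interior modification (essentially redoing the paper's analysis), or simply adopt the special-representative device from the start.
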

\begin{proof}
In view of Proposition (\ref{P81}) all arrows in both  diagrams are well defined  with $Int_r: H_{DR}^r(M) \to H^r_X(M)$ isomorphism. In order to conclude that the second diagram is commutative 
it suffices to check that for any two cohomology classes $\xi_1\in H_{DR}^r(M),$ and $\xi_2\in H_{DR}^p (M)$ one can choose representatives, closed forms  $\omega_1\in \Omega^r(M)$ and $\omega_2\in \Omega^p(M),$ such that  
\begin{equation}\label {DD}
\int _{W^-_x} \omega_1 \wedge \omega_2 = \sum _{y\in \mathcal C_r\mid  x>y} \int _{W^-_y}\omega_1 \cdot  \int _{\mathcal M(x,y)}\omega_2
\end{equation}
holds true. This is indeed the case with any such representative $\omega_1$ vanishing in some  neighborhood of $\cup_{y\in \mathcal X<\leq r} W^-_y $
and $\omega_2$ vanishing in some  neighborhood of $\{x\}$  
choice always possible in view of the fact that $\cup_{\{y\in \mathcal X \mid i(y) <r\}} W^-_y $
is a compact ANR  of dimension $\leq  r-1$  
is a compact ANR  of dimension $\leq  p-1$.The verification  is sketched in the Appendix and   with  more details in an updated version of \cite {Bu3} soon to be posted on arXiv.

\end{proof}

\section {Proof of Theorem (\ref {T41})} \label {S9}

Part 1.  \

(a):
Since $f$ is strictly decreasing on each nontrivial trajectory there are no closed trajectories,
Since the set $\mathcal X$ is a discrete set of points in a closed manifold $M$ it is finite.
Since $\mathcal X$ is finite and each manifold  with corners $\hat {\mathcal T} (x,y)$ is compact then
$\pi_0(\mathcal T)= \pi_0(\hat {\mathcal T})$ is a finite set.

(b): holds true since  the cochain complex $(C^\ast(X), \delta_\ast)$ calculates the cohomology of $M$  by Theorem (\ref{T82}).

(c):observe that nontrivial cup product with $H^r(M)$ implies that at least for a pair of rest points $x>y$ with $i(x,y)=r$ $\int_{\mathcal M(x,y)} \omega \ne 0,$ hence $\mathcal T(x,y)\ne \emptyset$ 
hence $\mathcal T(r-1+p,p)\ne \emptyset.$

Part 2. \

One uses essentially the same arguments applied to the lift $\tilde f: \tilde M \to \mathbb R$  of the closed one form $\omega \in \Omega^1(M), d\omega=0$  on the closed manifold $M$ to the principal covering $\tilde M\to M$ discussed in section 2. The function $\tilde f$ is Lyapunov  for the vector field $\tilde X,$ the lift of $X$ on $\tilde M.$ 
 Details will be provided elsewhere.

\vskip .1in

\section {\bf Appendix}
\vskip .1in
Suppose  $X$ is a M-S vector field which satisfies C1, C2 in section 8 and $\mathcal O$ is a  collection of orientations. Then  the following holds true.

\begin {proposition}\label {P}
If $\omega_1$ and $\omega_2$ are two closed forms of rank $r$ and $p,$  the first  vanishing on a neighborhood of $\sqcup_{\{y\in \mathcal X\mid i(y) <r\}} W^-_y$  the second on a neighborhood of  $\sqcup_{\{x,y\in \mathcal X\mid x> y, i(x,y) <p\}} \mathcal M(x,y),$  then equality (\ref{DD}) holds true.
 \end{proposition}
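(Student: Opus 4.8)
Throughout, fix $x\in\mathcal X_{r+p}$: this is the only index for which (\ref{DD}) has content, since $\omega_1\wedge\omega_2$ has degree $r+p=\dim W^-_x$. The plan is to pass to the compactification, localize the integral to the index-$r$ faces of $\hat W^-_x$, and factor it face by face. Using Theorem~\ref{T62} and the definition of $Int$, write $\int_{W^-_x}\omega_1\wedge\omega_2=\int_{\hat W^-_x}(\hat i^-_x)^\ast(\omega_1\wedge\omega_2)$, which is legitimate because $\hat W^-_x$ is a compact oriented manifold with corners. Two bookkeeping facts make the hypotheses effective. First, $A_1:=\bigcup_{i(z)<r}W^-_z$ is compact (it is a union closed under $\leq$, by the Morse--Smale closure $\overline{W^-_z}=\bigcup_{z'\leq z}W^-_{z'}$) and carries exactly the lower boundary faces $W^-_z$, $i(z)<r$, of every $\hat W^-_w$; hence $\omega_1$ vanishing near $A_1$ means $\int_{W^-_y}\omega_1=\int_{\hat W^-_y}\omega_1$ is a complete integral for each $y\in\mathcal X_r$. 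Dually, the strata $\mathcal M(x,z)$ with $i(x,z)<p$ are precisely those occurring in $\partial\hat{\mathcal M}(x,y)$ for $i(y)=r$, so $\omega_2$ vanishing near them makes $\int_{\mathcal M(x,y)}\omega_2=\int_{\hat{\mathcal M}(x,y)}\omega_2$ complete as well; combined with $d\omega_2=0$ and fibrewise Stokes, this will force the relevant fibre integral to be locally constant.

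I next localize. The codimension-one faces of $\hat W^-_x$ are the products $\mathcal T(x,y)\times W^-_y$, one for each $y<x$, all of dimension $(i(x,y)-1)+i(y)=i(x)-1$. I claim $(\hat i^-_x)^\ast(\omega_1\wedge\omega_2)$ is supported in a disjoint union of collar neighbourhoods $C_y$ of the faces with $i(y)=r$ and $x>y$. Indeed $\hat i^-_x$ maps the faces with $i(y)<r$, together with their collars, into a neighbourhood of $A_1$ on which $\omega_1\equiv0$, and maps the faces with $i(y)>r$ together with the region near the cone point $x$ into a neighbourhood of $\{x\}\cup\bigcup_{i(x,z)<p}\mathcal M(x,z)$ on which $\omega_2\equiv0$; what survives lies near the index-$r$ faces. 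Disjointness of the $C_y$ follows because a descending broken trajectory meets at most one index-$r$ rest point, as $y>y'$ forces $i(y)>i(y')$. This reduces (\ref{DD}) to the single-face identity $\int_{C_y}(\hat i^-_x)^\ast(\omega_1\wedge\omega_2)=\int_{W^-_y}\omega_1\cdot\int_{\mathcal M(x,y)}\omega_2$.

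On one collar I would use the manifold-with-corners collar of the face together with the $\mathbb R$-flow to present $C_y$ as the total space of a fibration $\pi:C_y\to W^-_y$ with fibre $\mathcal M(x,y)$, the base recording the exit datum along the unstable directions of $y$ and the fibre the entry datum along the stable directions. The hyperbolic normal form of $X$ at $y$ (item~1 of the definition of an M-S field) decouples these two sets of directions, and since $i^-_y$ lands in the unstable slice while $i_{x,y}$ lands, near $y$, in the stable slice, $\int_{W^-_y}\omega_1$ and $\int_{\mathcal M(x,y)}\omega_2$ see only the top unstable, resp. top stable, component. Splitting $(\hat i^-_x)^\ast(\omega_1\wedge\omega_2)$ into its principal bidegree part $\pi^\ast((i^-_y)^\ast\omega_1)\wedge(\text{fibre }p\text{-form from }\omega_2)$ and cross-terms, the principal part integrates by Fubini to the product, using that the fibrewise integral of $\omega_2$ is constant in the base point (closedness plus vanishing near the fibre boundary, via the second bookkeeping fact). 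Each cross-term carries a misplaced differential and, by $d\omega_1=d\omega_2=0$ and Cartan's formula along the flow, can be rewritten as an exact form in the fibre or base direction; its integral collapses by Stokes to a contribution at the entry end of the collar (where $\omega_2\equiv0$, near $x$ and the higher strata) or at the exit end (where $\omega_1\equiv0$, near the lower skeleton), hence vanishes. Reading the orientations off Proposition~\ref{P71} and Observation~\ref{O72}, which split $o_{x,y}$ as $o^-_y$ on the base times the induced fibre orientation, fixes the signs in (\ref{DD}) as $+$; summing over the finitely many $y\in\mathcal X_r$ with $x>y$ (finite by C2) completes the argument.

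The step I expect to be the genuine obstacle is the vanishing of the cross-terms on a single collar, that is, the verification that only the base-$\omega_1$/fibre-$\omega_2$ component contributes. This is exactly where the two support hypotheses are indispensable and must be used in tandem: the hyperbolic normal form supplies the decoupling of entry and exit directions, but it is the vanishing of $\omega_1$ near the low skeleton and of $\omega_2$ near the upper strata that kills the two boundary contributions produced by the flow-direction Stokes argument. Making the limit $C_y\to\mathcal T(x,y)\times W^-_y$ precise (uniform control of the collar map as the break sharpens) is the analytic heart of the proof, and is where I would follow the detailed computations of \cite{Bu3} and \cite{BFK}.
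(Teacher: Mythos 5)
Your proposal fails at its first substantive step, and the failure is structural, not a matter of missing detail. The claim that $(\hat i^-_x)^\ast(\omega_1\wedge\omega_2)$ is \emph{supported} in a disjoint union of collars of the index-$r$ faces of $\hat W^-_x$ is false. The hypotheses of the Proposition are support conditions in $M$ near $\bigsqcup_{\{z\mid i(z)<r\}}W^-_z$ and near $\bigsqcup_{\{x>z,\ i(x,z)<p\}}\mathcal M(x,z)$; a point $q$ in the deep interior of $W^-_x$, far from both of these sets, lies in neither neighborhood, so nothing forces the top-degree form $\omega_1\wedge\omega_2$ to vanish on $T_qW^-_x$. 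Already on the torus with its standard Morse--Smale flow (one maximum $x$, two saddles, one minimum; $r=p=1$, so $\omega_2$ is unconstrained because no nonempty $\mathcal M(x,z)$ has $i(x,z)<1$, and $\omega_1$ is merely cut off near the minimum) the form $\omega_1\wedge\omega_2$ is an area-type form on most of the $2$-cell $W^-_x$: the integral over the deep interior is not zero, it is simply accounted for by the right-hand side of (\ref{DD}). Your argument for the faces with $i(z)>r$ also confuses the image of a face with the corresponding moduli space: the face is $\mathcal T(x,z)\times W^-_z$ and its collar is carried by $\hat i^-_x$ into a neighborhood of $W^-_z$, not of $\mathcal M(x,z)$; since $\mathcal M(x,z)\subset W^+_z$ meets $W^-_z$ only at $z$, the vanishing of $\omega_2$ near the $\mathcal M$'s gives nothing on most of that collar. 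Consequently the ``single-face identity'' is not what remains to be proved, and the fibrewise Fubini-plus-cross-terms analysis that you build on top of it --- whose key vanishing step you yourself flag as the unresolved obstacle --- has no foothold.

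The idea you are missing, and the way the paper actually proceeds, is to \emph{replace} the form rather than localize it. Using a conic structure $\theta_x\colon C(\partial\hat W^-_x)\to\hat W^-_x$ coming from a rounding-corners homeomorphism $(\hat W^-_x,\partial\hat W^-_x)\cong (D^{i(x)},S^{i(x)-1})$, the paper defines on each piece $\hat W_{x,y}=\theta_x\bigl(C(W^-_y\times\mathcal T(x,y))\bigr)$ the separated-variables closed form $\underline{\omega}'_{1,y}\wedge\underline{\omega}'_{2,y}$, where $\underline{\omega}'_{1,y}$ is pulled back from $\omega_1|_{W^-_y}$ through the projection to $W^-_y$ and $\underline{\omega}'_{2,y}$ is pulled back from $\omega_2$ restricted to the cone direction, i.e.\ to $\mathcal M(x,y)$. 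The two support hypotheses are used exactly once: to ensure these local forms have compact support in the pieces, hence patch to a single closed form $\underline{\omega}'$ on all of $\hat W^-_x$ agreeing with $\underline{\omega}_1\wedge\underline{\omega}_2$ along $\partial\hat W^-_x$ (agreement taken in the strong, neighborhood sense --- this is the one point where even the paper's sketch requires care, since pullbacks of top-degree forms to the boundary vanish trivially). For degree reasons $\underline{\omega}'$ vanishes identically on every piece with $i(y)\ne r$: a degree-$r$ form pulled back from $W^-_y$ with $\dim W^-_y<r$ is zero, and likewise a degree-$p$ form pulled back from $\mathcal M(x,y)$ with $\dim\mathcal M(x,y)<p$. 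The punchline is then cohomological rather than analytic: since $\hat W^-_x$ is compact and contractible, two closed forms agreeing along the boundary have equal integrals (exactness plus Stokes, Theorem \ref{T62}), so $\int_{\hat W^-_x}\underline{\omega}_1\wedge\underline{\omega}_2=\int_{\hat W^-_x}\underline{\omega}'$, and on $\underline{\omega}'$ Fubini yields the right-hand side of (\ref{DD}) with no cross-terms at all, because separation of variables holds by construction rather than approximately in a collar. Your Cartan-formula/Stokes sketch is an attempt to establish by estimates near the faces precisely the statement that this replacement renders trivial; that is why your proposal stalls where it does.
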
 

 First one specifies some notations.

For a space $X$ let 

\begin{itemize}
\item  $C(X) = X\times [0,1] / X\times \{1\}$ (with the space $X\times \{1\}$ collapsed to a point $v$),  
\item  $p_X:C(X)\to [0,1],$ the map defined by  $p(x,t)=t,$  hence $v=p^{-1}(1),$
\item   $C^1(X):= p^{-1} ([0,1))= X\times [0,1) = CX\setminus v$,  
\item $C^0(X):=CX\setminus \{(X\times 0) \sqcup v\}= X\times (0,1).$
\end{itemize}%
Clearly $C(X)\supset C^1(X)\supset C^0 (X).$ 

\noindent $CX$ is referred to as the {\it cone} of $X$, $v$ as the vertex of the cone and the space $X$ as the base  of the cone.

If $X$ is a smooth manifold then $C^0(X)$ is a smooth manifold, and $C^1(X)$ is a smooth manifold with boundary, whose  boundary is the base of the cone,  and map $p_X: C^1(X)\to [0,1)$ is smooth with any  $\epsilon\in [0,1)$  a regular value. 

 The unit disc $D^n$ in the Euclidean space $\mathbb R^n,$   is the cone of the unit sphere $S^{n-1},$ $D^n=C(S^{n-1},$ with the vertex $v= 0\in D^n$ and $p_S (u)= 1-||u||, u\in D^n.$ 
\vskip .1in

For the manifold $M$ equipped with the vector field $X$ as above 
 and any $x\in \mathcal X$ the validity of the Poincar\'e conjecture implies that there exists {\it rounding-corner} homeomorphisms (cf. section 5), $h_x: (\hat W^-_x,\partial \hat W^-_x) \to (D^{i(x)}, S^{i(x)-1}).$ Such homeomorphism  induces a {\it conic structure} for $\hat W^-_x,$ precisely a  homeomorphism $$\theta_x: C(\partial \hat W^-_x)\to \hat W^-_x$$ such that the restriction to any $\hat W^-(k)$ of the homeomorphism 
  $h_x\cdot \theta^{-1}_x$  
is a diffeomorphism onto its image.  One can choose $h_x$ s,t. $h_x(C^0 (y\times \mathcal T(x,y))= \mathcal M(x,y)$  

Recall from Theorem (\ref{T62}) that 

$\partial \hat W^-_x= \sqcup_{k\geq 1} \hat W^-_x(k)$,  

$\hat W^-_x(1)= \sqcup_{y\mid x>y}
 W^-_y\times \mathcal T(x,y), $  
 
 $\mathcal M(x,y)= \mathcal T(x,y)\times (0,1).$    

Define define, see Figure 2 below,
 
\begin{itemize}
\item $\boxed{ \hat W_{x,y}:= \theta_x (C(W^-_y\times \mathcal T(x,y))} \subset \hat W^-_x,$ 
\item $\boxed{\hat W^1_{x,y}:= \theta_x(C^1(W^-_y\times \mathcal T(x,y)))}= \theta_x(W^-_y\times \mathcal T(x,y)\times [0,1)) \subset $ open subset of  $\hat W^-_x\setminus x,$ which is 
a smooth manifold with boundary, whose boundary is diffeomorphic to  $W^-_y\times \mathcal T(x,y)),$ 
\item $\boxed{W_{x,y}:=\theta_x (\overset{\circ} {C}(W^-_y\times \mathcal T(x,y)))}= \theta_x(W^-_y\times \mathcal T(x,y)\times (0,1))$ diffeomorphic  to  $W^-_y\times \mathcal M(x,y)\subset W^-_x)$ which is an open subset of $W^-_x$
see Figure 2. below.
\end{itemize} 

\hskip .2in
\begin{figure}
\center
\includegraphics [ width=10 cm]{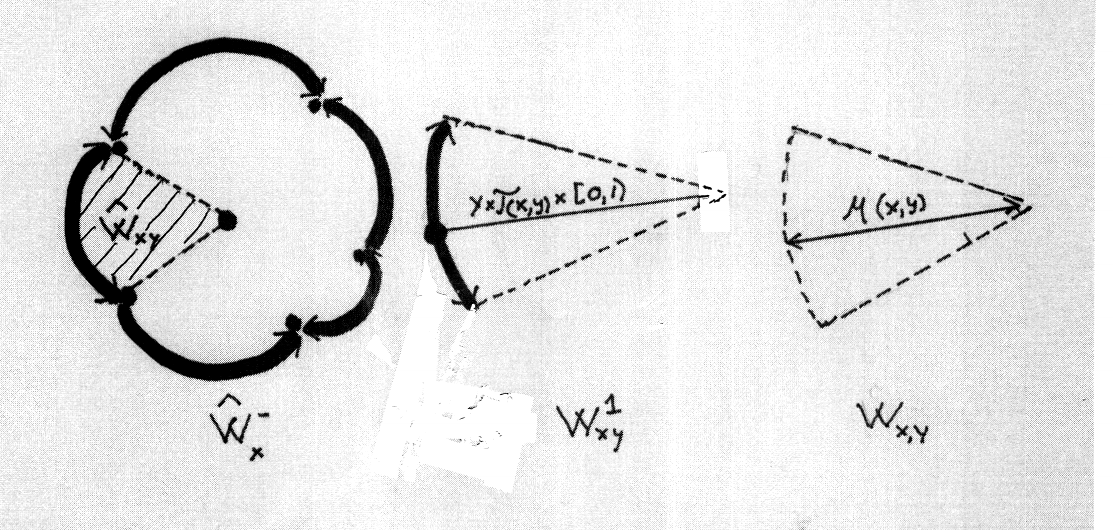}
\caption{}
\end{figure}

Note that :
\begin{enumerate}[label=(\alph*)]
\item Since the sets $\hat W^-_{x,y}$ are disjoint, a collection of differential closed forms  with compact support 
$\omega_ y  \in \Omega^k (\hat W^1_{x,y})$ define a differential closed form with compact support on $\hat W^-_x,$  $\omega\in \Omega^k(\hat W^-_x).$  
\item For a form $\omega \in \Omega^k(M)$ and $x\in \mathcal X$ with $i(x)=k,$  let $\underline \omega : i_x^\ast (\omega)\in  \Omega^k(\hat W^-_x)$ and observe that $\int _{W^-_x} \omega $ is convergent and equal to 
$\int _{\hat W^-_x} \underline \omega .$  
\item If $\underline \omega\in  \Omega^k(\hat W^-_x)$  is a closed form which when restricted  to $\partial \hat W^-_x$ vanishes, then   $\int _{\hat W^-_x} \underline \omega =0.$
This because  $\hat W^-_x$ is contractible hence $\underline \omega$  is exact and  Stokes Theorem applies;
in particular if $\underline \omega_i\in  \Omega^k(\hat W^-_x), i=1,2$ are two such closed forms whose restrictions to $\partial \hat W^-_x$ agree then $\int _{\hat W^-_x} \underline \omega_1 =
\int _{\hat W^-_x} \underline \omega_2.$
\end{enumerate}
 \vskip.1in
{\it Proof of Proposition (\ref{P}):}

Define 
$$\underline \omega_1:={\hat i_x}^\ast (\omega_1)\in \Omega^r(\hat W^-_x) , \ \ \  \  \underline \omega_2:={\hat i_x}^\ast (\omega_2)\in \Omega^p(\hat W^-_x)$$ 
and denote by 
$$\underline \omega_{1,y} := \underline \omega_1|_{\hat W^-_{x,y}}
\ \ \ \underline \omega_{2,y}:= \underline \omega_2|_{\hat W^-_{x,y}},$$   
their restrictions to $\hat W^1_{x,y}.$
\vskip .1in

Let $\pi_1$ and $\pi_2$ be the projection of $ W^-_y\times (\mathcal T(x,y)\times [0,1))$ on  the first and second factor and recall from section 3  the canonical identification of $\mathcal M(x,y)$ to the product $\mathcal T(x,y)\times (0.1).$ 
Define $$\underline \omega'_{1,y} := \pi_1^\ast (\underline \omega_{1,y}|_{\theta_x(W^-_y\times \mathcal T(x,y)\times 0)}) =\pi_1^\ast (\omega_1|_{W^-_y})$$ 
and 
$$\underline \omega'_{2,y}:= \pi_2^\ast (\underline  \omega_2 |_{\theta_x(y\times \mathcal T(x,y)\times [0,1)})$$  

These definitions simply mean  that for any $u\in W^-_y, \ \gamma \in \mathcal T(x,y), \  t\in [0,1)$ one has 
$$\underline \omega'_{1,y}(\theta_x(u,\gamma,t))=    \omega_1(u)$$  and 

$$\underline \omega'_{2,y} (\theta_x(y,\gamma,t))= \begin{cases}\omega_2(p), \rm{if}\ t\ne0\\ \omega_2(y), \rm{if} \  t=0\end{cases}, $$ with $p= (\gamma,t)$ 
 the point on the instanton $\gamma$ corresponding to $t$  by the identification of $ \mathcal M(x,y)$ with $\mathcal T(x,y)\times (0,1).$  when $t\ne 0$

One observes first that $\underline \omega'_{1,y} \wedge \underline \omega'_{2,y}$ is an $(r+p )-$ form with compact support in $\hat W^1_{x,y},$  hence the collection  for all $y $ with $x>y$ defines a closed form $\underline \omega'= (\underline  \omega_{1} \wedge \underline \omega_{2})'$ on $\hat W^-_x$  which agrees with $\underline  \omega_1 \wedge \underline \omega_2$ on $\partial \hat W^-_x,$ hence
as pointed out above  

\begin{equation}\label {DDD2}
\begin{aligned}
\int _{W^-_x}  \omega_1\wedge \omega_2=     
\int _{\hat W^-_x}  \underline \omega_1\wedge \underline \omega_2=&
\int _{\hat W^-_x}  \underline \omega'= \sum _{y\mid x>y}\int _{\hat W_{x,y}\setminus x} \underline \omega'_{y,1}\wedge \underline \omega'_{y,2}\\
=\sum _{y\mid x>y}\int _{\hat W_{x,y}\setminus x} \underline \omega'_{y,1}\wedge \underline \omega'_{y,2}=&
\sum _{y\mid x>y}\int _{W^-_y\times \mathcal M(x,y)} \underline \omega'_{y,1}\wedge \underline \omega'_{y,2}
\end{aligned}
\end{equation} 
 Since $ \underline \omega'_{y,1}$ and $ \underline  \omega'_{y,2}$ have separated variables one has 
\begin{equation} \label {DDD3}
\int _{W^-_y\times \mathcal M(x,y)}  \underline \omega'_{y,1} \wedge  \underline \omega'_{y,2}= \int  _{W^-_y}\omega_1    \cdot     \int_{\mathcal M(x,y)} \omega_2    
\end{equation}
Combining (\ref {DDD2}) and (\ref {DDD3}) one obtains (\ref {DD}).
\vskip .1in

\end{document}